\setlist[enumerate]{leftmargin=1.5em}
\setlist[itemize]{leftmargin=1.5em}
\definecolor{green}{rgb}{0,0.8,0} 
\newtheorem{thm}{Theorem}[section]
\newtheorem{lem}[thm]{Lemma}
\newtheorem{prop}[thm]{Proposition}
\newtheorem{defn}[thm]{Definition}
\theoremstyle{definition}
\theoremstyle{remark}
\newtheorem{rmk}[thm]{Remark}
\numberwithin{equation}{section}
\newcommand{\nnrm}[1]{{\vert\kern-0.25ex\vert\kern-0.25ex\vert #1 
		\vert\kern-0.25ex\vert\kern-0.25ex\vert}}
\begin{document}
\bibliographystyle{plain}
\title{Stability of radially symmetric, monotone vorticities of \\ 2D Euler equations}
\author{Kyudong Choi\thanks{Department of Mathematical Sciences, Ulsan National Institute of Science and Technology. Email: kchoi@unist.ac.kr}\and 
  Deokwoo Lim\thanks{Department of Mathematical Sciences, Ulsan National Institute of Science and Technology. Email: dwlim@unist.ac.kr}   }

\date\today
 
  \maketitle

\renewcommand{\thefootnote}{\fnsymbol{footnote}}
\footnotetext{\emph{Key words:} 2D Euler; vorticity distribution; stability; angular impulse;   second momentum; rearrangement.\\
\quad\emph{2010 AMS Mathematics Subject Classification:} 76B47, 35Q35 }
\renewcommand{\thefootnote}{\arabic{footnote}}

\begin{abstract}
We consider  the incompressible Euler equations in $ \mathbb{R}^{2} $ when the initial vorticity  is bounded, radially symmetric and non-increasing in the radial direction. Such a radial distribution  is stationary, and we show that the monotonicity produces  stability in some weighted  norm related to the angular impulse.
For instance, it covers the cases of circular vortex patches and Gaussian distributions.
Our   stability does not depend on  
 $L^\infty$-bound  or  support size of   perturbations. The proof is based on the fact that such a radial monotone distribution minimizes the  impulse of functions having the same level set measure.
\end{abstract}\vspace{1cm} 
{
 \section{Introduction}}
\noindent  
 
Stability   for  special solutions of the incompressible Euler equations has been an important topic in hydrodynamics since Kelvin's work \cite{Kelvin1880col}.
In this paper, we are interested in stability    of  circular flows generated by \textit{radial, monotone} vorticities. 
 
We consider the incompressible Euler equations in $ \mathbb{R}^{2} $ in vorticity form:
\begin{align}
	\begin{split}
		\partial_t\omega+u\cdot\nabla\omega&=0\quad\text{ for }\quad(t,x)\in(0,\infty)\times\mathbb{R}^{2},\\
		\omega|_{t=0}&=\omega_{0}\quad\text{ for }\quad x\in\mathbb{R}^{2},\label{eq1}
	\end{split}	
\end{align}
where the velocity $ u $ is determined from the vorticity $ \omega $ by the Biot-Savart law given as
\begin{equation}
	\nonumber u(x)=\int_{\mathbb{R}^{2}}K(x-y)\omega(y)dy,\quad K(x)=\frac{1}{2\pi}\left( -\frac{x_{2}}{|x|^{2}},\frac{x_{1}}{|x|^{2}}\right) .\label{eq3}
\end{equation}  
Yudovich's theory \cite{Yu63} says that if the initial vorticity $ \omega_{0} $ lies on $ (L^{1}\cap L^{\infty})(\mathbb{R}^{2}) $, then there exists a unique global-in-time weak solution $ \omega(t) $ of \eqref{eq1}. For modern treatments of the theory, we refer to  Majda-Bertozzi \cite[Ch. 8]{MaBVo}, Marchioro-Pulvirenti \cite[Ch. 2]{MaPMT},  Chemin \cite[Ch. 5]{ChPIF}. We denote the angular impulse of a function $ f $ by
\begin{equation}
	\nonumber J(f):=\int_{\mathbb{R}^{2}}|x|^{2}f(x)dx.
\end{equation}
For $ p\in [1,\infty)$, we define the norm $ \left\| \cdot\right\| _{{J}_{p}} $ by
\begin{gather}
	\nonumber\left\| g\right\|_{{J}_{p}}:=\left\| g\right\| _{L^{p}(\mathbb{R}^2)}+J(|g|)=\left( \int_{\mathbb{R}^{2}}|g(x)|^{p}dx\right) ^{\frac{1}{p}}+\int_{\mathbb{R}^{2}}|x|^{2}|g(x)|dx,
\end{gather}
for $ g\in L^{p}(\mathbb{R}^{2}) $ that satisfies $ J(|g|)<\infty $. 
We remind readers that
 for any nonnegative initial data $ \omega_{0}\in   L^{\infty}(\mathbb{R}^{2}) $ with $ J(\omega_{0})<\infty $, the corresponding solution $\omega(t)$ of \eqref{eq1}  is nonnegative and has conservations of any $ L^{p} $-norm $\|\omega(t) \|_{L^p}$ for $ p\in[1,\infty] $ and the angular impulse $J(\omega(t))$. In particular,  the Lebesgue measure of each level set  
\begin{equation}\label{intro_level}
  |\{x\in\mathbb{R}^2\,:\,\omega(t,x)>\alpha\}|, \quad \alpha>0
\end{equation}  is preserved in time. 

Throughout this paper, we denote $ B_{r}  $ as the disk centered at the origin with radius $ r>0 $;
\begin{equation}
	\nonumber B_{r}:=\left\lbrace x\in\mathbb{R}^{2} : |x|<r\right\rbrace.
\end{equation} For convention, 
we also denote 
\begin{equation}
	\nonumber B_{0}:=\mbox{the empty set},\quad D:= \mbox{the unit disk } B_{1}.
\end{equation}

\subsection{
Main results}

It is a well-known fact that the characteristic function $ 1_{D} $ of the unit disk is a stationary solution of \eqref{eq1}. As a pioneer work, Wan-Pulvirenti \cite{WaP85}  obtained $ L^{1} $-stability of $ 1_{D} $ in  patch-type perturbations  for circular bounded domains $B_R,\, R>1$. For the whole space $\mathbb{R}^2$,   Sideris-Vega \cite{SiV09} proved  $ L^{1} $-stability with the explicit estimate
\begin{equation}
	\nonumber\sup_{t\geq0}| {\Omega_{t}}\bigtriangleup{D}|^2\leq  {4\pi\sup_{\Omega_{0}\bigtriangleup D}\left| |x|^{2}-1\right|\cdot |{\Omega_{0}}\bigtriangleup{D}|},
\end{equation}
where  $1_{\Omega_t}$ is the solution for   patch-type initial data $1_{\Omega_0}$, and the symbol $\bigtriangleup$ means the symmetric difference.
 We also refer to the result of Dritschel \cite{Dr88}.
Classical numerical results around $1_D$ can be found in  Deem-Zabusky \cite{DeZ78} and Buttke \cite{Bu90}. We also see \cite{CJ_perimeter}, \cite{Choi_distance}, \cite{CJ_wind}, Elgindi-Jeong \cite{EJSVP2}
for some applications about growth in perimeter of patch boundary and  winding number of particles. \\

First, we obtain $ {J}_{2} $-stability of $1_D$ allowing nonpatch-type perturbations which are not necessarily compactly supported.
\begin{thm}\label{thm1}
	For $ \varepsilon>0 $, there exists $ \delta>0 $ such that if a nonnegative $ \omega_{0}\in L^{\infty}(\mathbb{R}^{2}) $ with $ J(\omega_{0})<\infty $ satisfies
	\begin{equation}
		\nonumber \left\| \omega_{0}-1_{D}\right\| _{{J}_{2}}\leq\delta,
	\end{equation}
	then the solution $ \omega(t) $ of \eqref{eq1} satisfies
	\begin{equation}
		\nonumber \sup_{t\geq0}\left\| \omega(t)-1_{D}\right\| _{{J}_{2}}\leq\varepsilon.
	\end{equation}
\end{thm}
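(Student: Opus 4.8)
The plan is to run a Lyapunov-type argument built on the conserved quantities of the Yudovich solution $\omega(t)$: all $L^{p}$ norms, the angular impulse $J(\omega(t))=J(\omega_0)$, and the entire family of level set measures \eqref{intro_level}; the last of these means that the symmetric decreasing rearrangement $\omega(t)^{*}$ is the \emph{fixed} radial profile $\omega_0^{*}$ for every $t\ge 0$. First I would record two reductions. Since $\|g\|_{J_2}$ controls $\|g\|_{L^{1}(B_1)}$ (by Cauchy--Schwarz on the bounded set) and $J(|g|)$ controls $\|g\|_{L^{1}(B_1^{c})}$ (where $|x|^{2}\ge 1$), the hypothesis yields $\|\omega_0-1_D\|_{L^{1}}\le C\delta$; and since $1_D=1_D^{*}$ and passing to the decreasing rearrangement cannot increase the $L^{1}$ distance to $1_D$, also $\|\omega_0^{*}-1_D\|_{L^{1}}\le C\delta$. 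Hence it suffices to show that $\sup_{t\ge 0}\|\omega(t)-\omega_0^{*}\|_{L^{1}}$ is small, and then to upgrade this $L^{1}$-smallness to $J_2$-smallness.

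The heart of the argument is a quantitative form of the impulse-minimization property mentioned in the abstract, namely the inequality
\begin{equation}
\nonumber
J(h)-J(h^{*})\ \ge\ \frac{1}{4\pi}\int_{0}^{\infty}\bigl|\{h>s\}\bigtriangleup\{h^{*}>s\}\bigr|^{2}\,ds ,
\end{equation}
valid for every nonnegative $h\in (L^{1}\cap L^{\infty})(\mathbb{R}^{2})$ with $J(h)<\infty$. To prove it I would write $J(h)=\int_{0}^{\infty}\!\int_{\{h>s\}}|x|^{2}\,dx\,ds$, and likewise for $h^{*}$, whose superlevel set $\{h^{*}>s\}$ is the disk $B_{r(s)}$ with $\pi r(s)^{2}=|\{h>s\}|$; this reduces the claim to the single-set estimate $\int_{\Omega}|x|^{2}\,dx-\int_{B_{r}}|x|^{2}\,dx\ge |\Omega\bigtriangleup B_{r}|^{2}/(4\pi)$ whenever $|\Omega|=|B_{r}|$. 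That estimate is the bathtub principle with a quadratic remainder: with $a:=|\Omega\setminus B_{r}|=|B_{r}\setminus\Omega|$, the left side can only decrease if $\Omega\setminus B_{r}$ is replaced by the area-$a$ annulus adjacent to $\partial B_{r}$ from outside and $B_{r}\setminus\Omega$ by the area-$a$ annulus adjacent to $\partial B_{r}$ from inside, after which $\int_{B_{\rho}}|x|^{2}\,dx=\tfrac{\pi}{2}\rho^{4}$ makes the bound explicit.

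Next I would apply this with $h=\omega(t)$. Because $\omega(t)^{*}=\omega_0^{*}$, the deficit on the left equals the time-independent constant $\eta_0:=J(\omega_0)-J(\omega_0^{*})$, and one checks $\eta_0\le C\delta$: indeed $|J(\omega_0)-J(1_D)|\le J(|\omega_0-1_D|)\le\delta$, while $J(\omega_0^{*})\ge J(1_D)-C\|\omega_0-1_D\|_{L^{1}}$ by discarding the nonnegative mass of $\omega_0^{*}$ lying outside $B_1$ and using $|x|^{2}\le 1$ inside $B_1$. Since $\|\omega(t)-\omega_0^{*}\|_{L^{1}}=\int_{0}^{\infty}|\{\omega(t)>s\}\bigtriangleup\{\omega_0^{*}>s\}|\,ds$, I would split the $s$-integral at a free level $S$: on $(0,S)$ apply Cauchy--Schwarz against the inequality above, and on $(S,\infty)$ bound the integrand by $\mu_{\omega(t)}(s)+\mu_{\omega_0^{*}}(s)=2\mu_{\omega_0}(s)$ and use $\int_{S}^{\infty}\mu_{\omega_0}(s)\,ds\le S^{-1}\int_{0}^{\infty}s\,\mu_{\omega_0}(s)\,ds=\tfrac{1}{2S}\|\omega_0\|_{L^{2}}^{2}$. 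Optimizing in $S$ gives
\begin{equation}
\nonumber
\sup_{t\ge 0}\|\omega(t)-\omega_0^{*}\|_{L^{1}}\ \le\ C\,\|\omega_0\|_{L^{2}}^{2/3}\,\eta_0^{1/3}\ \le\ C'\delta^{1/3},
\end{equation}
hence $\sup_{t\ge 0}\|\omega(t)-1_D\|_{L^{1}}\le C\delta^{1/3}$. Crucially, neither $\|\omega_0\|_{L^{\infty}}$ nor $|\supp\omega_0|$ enters: the high-level tail is controlled by $\|\omega_0\|_{L^{2}}$ and the spatial tail by $J(\omega_0)$, which is exactly the feature the statement advertises.

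It remains to convert $L^{1}$-closeness into $J_2$-closeness. For the impulse part, split $\mathbb{R}^{2}=B_1\cup B_1^{c}$: on $B_1$, $|x|^{2}\le 1$ gives $\int_{B_1}|x|^{2}|\omega(t)-1_D|\le\|\omega(t)-1_D\|_{L^{1}}$; on $B_1^{c}$, $1_D=0$ gives $\int_{B_1^{c}}|x|^{2}\omega(t)=J(\omega_0)-\int_{B_1}|x|^{2}\omega(t)\le C\delta^{1/3}$, using $J(\omega_0)\le J(1_D)+\delta$ and $\int_{B_1}|x|^{2}\omega(t)\ge J(1_D)-\|\omega(t)-1_D\|_{L^{1}}$. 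For the $L^{2}$ part, expand $\|\omega(t)-1_D\|_{L^{2}}^{2}=\|\omega_0\|_{L^{2}}^{2}+\pi-2\int_{B_1}\omega(t)$ and combine $|\int_{B_1}\omega(t)-\pi|\le\|\omega(t)-1_D\|_{L^{1}}\le C\delta^{1/3}$ with $\bigl|\,\|\omega_0\|_{L^{2}}^{2}-\pi\,\bigr|\le C\delta$ to get $\|\omega(t)-1_D\|_{L^{2}}\le C\delta^{1/6}$. Thus $\sup_{t\ge 0}\|\omega(t)-1_D\|_{J_2}\le C\delta^{1/6}$, and choosing $\delta$ with $C\delta^{1/6}\le\varepsilon$ finishes the proof. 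The step I expect to be the main obstacle is establishing the quantitative rearrangement inequality and pushing it through the $L^{1}$-interpolation in a form that genuinely avoids any dependence on $\|\omega_0\|_{L^{\infty}}$ or on the support size; the other ingredients (the contraction property, the smallness of $\eta_0$, and the $B_1/B_1^{c}$ bookkeeping) should be routine.
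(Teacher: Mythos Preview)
Your argument is correct. It is, however, a genuinely different route from the paper's proof of this theorem. The paper proves Theorem~\ref{thm1} by a soft contradiction/compactness argument (Section~2.3): assuming failure, one extracts a sequence $\omega_n(t_n)$ whose $L^1$, $L^2$ norms and impulse converge to those of $1_D$, then invokes a compactness proposition (built on the variational uniqueness $S'=\{1_D\}$) to force $\|\omega_n(t_n)-1_D\|_{J_2}\to 0$. That proof yields no rate. Your proof is direct and quantitative, delivering $\|\omega(t)-1_D\|_{J_2}\le C\delta^{1/6}$.

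Your approach is closer in spirit to the paper's Section~3, where the general Theorem~\ref{thm2} is proved constructively; applied with $\zeta=1_D$, $p=2$ that gives the rate $\delta^{1/4}$. The main technical difference is how the $L^\infty$-independence is achieved. The paper states the rearrangement inequality in the form $\|f-f^*\|_{L^1}^2\le 4\pi\|f\|_{L^\infty}[J(f)-J(f^*)]$ and then removes the unwanted $\|\omega_0\|_{L^\infty}$ by a cutoff at level $M+1$, exploiting that the cutoff commutes with rearrangement and with the flow's level-set conservation. You instead keep the inequality in its layer-cake form $J(h)-J(h^*)\ge\frac{1}{4\pi}\int_0^\infty|\{h>s\}\bigtriangleup\{h^*>s\}|^2\,ds$ and interpolate by splitting the $s$-integral, using $\|\omega_0\|_{L^2}$ to control the high-level tail. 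Your version avoids the cutoff machinery entirely and is conceptually cleaner for this specific $\zeta=1_D$, $p=2$ case, at the price of a slightly weaker exponent; the paper's cutoff approach, on the other hand, generalizes uniformly to all $p\in[1,\infty)$ and to arbitrary compactly supported monotone $\zeta$.
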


Such  stability lies on  the fact that $ 1_{D} $ \textit{uniquely} minimizes the angular impulse $J$ of nonnegative functions $\xi$ bounded by $ 1 $ and having the same $ L^{1} $-norm as $ 1_{D} $ (see Proposition \ref{prop1-1});
\begin{equation*}
0\leq \xi\leq 1, \quad \|\xi\|_{L^1(\mathbb{R}^2)}=\|1_D\|_{L^1(\mathbb{R}^2)}.  
\end{equation*}
 During the proof,  $ \delta $ is implicitly obtained by a contradiction argument.

Now we consider a generalization of circular patches. We first note that every radially symmetric vorticity is also a stationary solution of \eqref{eq1} because it makes the stream function $ \phi=\Delta^{-1}\omega $ be radially symmetric as well, which gives us
\begin{equation}
	\nonumber u\cdot\nabla\omega=\nabla^{\perp}\phi\cdot\nabla\omega=0
\end{equation}
(cf. 
see G\'{o}mez Serrano-Park-Shi-Yao
\cite{JPSY} for a related   converse  statement).
Let's take any  function $ \zeta\in L^{\infty}(\mathbb{R}^{2}) $ which is \textit{nonnegative, radially symmetric, and non-increasing}; there exists a function $ f\in L^{\infty}([0,\infty)) $ such that
\begin{equation}
	\nonumber \zeta(x)=f(|x|)\quad\text{ for }\quad x\in\mathbb{R}^{2},\quad f\geq0,\quad f(r_{1})\geq f(r_{2})\quad\text{ for }\quad r_{1}\leq r_{2}.
\end{equation}
Theorem \ref{thm2} below says that $ \zeta $ is $ {J}_{p} $-stable for $ p\in[1,\infty) $ with an explicit estimate \eqref{estimate} whenever $ \zeta $ is compactly supported in $ \mathbb{R}^{2} $. For instance, Theorem \ref{thm1} is just a particular case ($\zeta=1_D$ and $p=2$) of Theorem \ref{thm2}. 

\begin{thm}\label{thm2}
	($ \text{supp }(\zeta) $ : compact) For any constants $ R,M>0$, there exists a constant $ C=C(R,M) >0 $ such that if $ \zeta\in L^{\infty}(\mathbb{R}^{2}) $ is nonnegative, radially symmetric, non-increasing, and compactly supported with
	\begin{equation}
		\text{supp }(\zeta)\subset B_{R},\quad \left\| \zeta\right\| _{L^{\infty}}\leq M,\label{RM}
	\end{equation}
	then for any nonnegative $ \omega_{0}\in L^{\infty}(\mathbb{R}^{2}) $ with $ J(\omega_{0})<\infty $, the solution $ \omega(t) $ of \eqref{eq1} satisfies
	\begin{equation}
		\sup_{t\geq0}\left\| \omega(t)-\zeta\right\| _{{J}_{p}}\leq C\left[ \left\| \omega_{0}-\zeta\right\| _{{J}_{p}}^{\frac{1}{2p}}+\left\| \omega_{0}-\zeta\right\| _{{J}_{p}}\right] \quad\text{ for any }\quad p\in[1,\infty).\label{estimate}
	\end{equation}
\end{thm}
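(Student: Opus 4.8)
The plan is to reduce the whole estimate to a single scalar bound,
\[
  \sup_{t\ge0}\|\omega(t)-\zeta\|_{L^1}\ \lesssim_{R,M}\ \sqrt{\varepsilon},\qquad \varepsilon:=\|\omega_0-\zeta\|_{{J}_p},
\]
valid for $\varepsilon\le1$ (the range $\varepsilon\ge1$ is absorbed into the linear term of \eqref{estimate} by the crude bound $\|\omega(t)-\zeta\|_{{J}_p}\le 2\varepsilon+2\|\zeta\|_{L^p}+2J(\zeta)$, which uses only conservation of $\|\omega(t)\|_{L^q}$, of $J(\omega(t))$, and of all level-set measures). Once such an $L^1$ bound is available, the full $\|\cdot\|_{{J}_p}$ estimate follows cheaply: for the impulse part, $J(|\omega(t)-\zeta|)=J(\omega_0)+J(\zeta)-2J(\omega(t)\wedge\zeta)=[J(\omega_0)-J(\zeta)]+2J\bigl((\zeta-\omega(t))_+\bigr)\le\varepsilon+2R^2\|\omega(t)-\zeta\|_{L^1}$, since $(\zeta-\omega(t))_+\le\zeta$ is supported in $B_R$; for the $L^p$ part one truncates (see below) to gain an $L^\infty$ bound and interpolates $\|\cdot\|_{L^p}\le\|\cdot\|_{L^1}^{1/p}\|\cdot\|_{L^\infty}^{1-1/p}$, which is where the exponent $\tfrac1{2p}$ is born. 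Two preliminaries are used throughout. First, since $\zeta\equiv0$ on $B_R^c$, $\int_{B_R^c}|x|^2\omega_0\,dx=\int_{B_R^c}|x|^2|\omega_0-\zeta|\,dx\le J(|\omega_0-\zeta|)\le\varepsilon$. Second, truncating at height $M$ — set $\omega^{\mathrm{lo}}(t):=\min(\omega(t),M)$, $\omega^{\mathrm{hi}}(t):=(\omega(t)-M)_+$ — one has $\omega^{\mathrm{hi}}(0)\le|\omega_0-\zeta|$ and $|\omega^{\mathrm{lo}}(0)-\zeta|\le|\omega_0-\zeta|$ pointwise (both because $\zeta\le M$), and since the level sets of $\omega(t)$ are conserved, the distributions of $\omega^{\mathrm{lo}}(t)$ and $\omega^{\mathrm{hi}}(t)$ are frozen in time; hence $\|\omega^{\mathrm{hi}}(t)\|_{L^q}=\|\omega^{\mathrm{hi}}(0)\|_{L^q}\le\|\omega_0-\zeta\|_{L^q}$ for every $q$, so the high part is $O(\varepsilon)$ in all relevant norms, while $\|\omega^{\mathrm{lo}}(t)\|_{L^\infty}\le M$.

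The core is a quantitative form, at the level of a single level set, of the fact recalled in the introduction that a radially non-increasing profile minimizes $J$ among functions with prescribed level-set measures: if $\Omega\subset\mathbb{R}^2$ has $|\Omega|=\nu\in(0,\infty)$ and $B^*:=B_{\sqrt{\nu/\pi}}$ is the centered ball of the same area, then
\[
  |\Omega\triangle B^*|^2\ \le\ 4\pi\Bigl(\int_\Omega|x|^2\,dx-\int_{B^*}|x|^2\,dx\Bigr).
\]
This is proved by applying the bathtub principle to $\Omega\setminus B^*$ and to $B^*\setminus\Omega$ separately — both have measure $a=\tfrac12|\Omega\triangle B^*|$, the first beaten below by the thin annulus just outside $\partial B^*$, the second beaten above by the thin annulus just inside — and an elementary annulus computation gives $\int_{\Omega\setminus B^*}|x|^2-\int_{B^*\setminus\Omega}|x|^2\ge a^2/\pi$. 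Applying this with $\Omega=\{\omega(t)>s\}$ for $s\in(0,M)$: conservation of level sets gives $|\{\omega(t)>s\}|=|\{\omega_0>s\}|$, and the corresponding equal-area ball is precisely the (time-independent) level set $\{\min((\omega_0)^*,M)>s\}$, where $(\omega_0)^*$ is the symmetric decreasing rearrangement of $\omega_0$. Integrating in $s$ over $(0,M)$, with Cauchy–Schwarz there and the layer-cake identities $\int_0^M\!\int_{\{\omega(t)>s\}}|x|^2\,dx\,ds=J(\omega^{\mathrm{lo}}(t))$, $\int_0^M\!\int_{\{\min((\omega_0)^*,M)>s\}}|x|^2\,dx\,ds=J\bigl(\min((\omega_0)^*,M)\bigr)$, yields
\[
  \bigl\|\omega^{\mathrm{lo}}(t)-\min((\omega_0)^*,M)\bigr\|_{L^1}^2\ \le\ 4\pi M\Bigl(J(\omega^{\mathrm{lo}}(t))-J\bigl(\min((\omega_0)^*,M)\bigr)\Bigr).
\]

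It remains to see the right-hand side is $O(\varepsilon)$. One has $J(\omega^{\mathrm{lo}}(t))\le J(\omega(t))=J(\omega_0)$ (all integrands $\ge0$) and $|J(\omega_0)-J(\zeta)|\le J(|\omega_0-\zeta|)\le\varepsilon$; and $J(\min((\omega_0)^*,M))$ is within $O(\varepsilon)$ of $J(\zeta)$, since splitting $\int_{\mathbb{R}^2}=\int_{B_R}+\int_{B_R^c}$, the inside part is controlled by $R^2\|\min((\omega_0)^*,M)-\zeta\|_{L^1}\le R^2\|\omega_0-\zeta\|_{L^1}$ (truncation is $1$-Lipschitz, the rearrangement is an $L^1$-contraction, and $\zeta^*=\zeta$), while $0\le\int_{B_R^c}|x|^2\min((\omega_0)^*,M)\le\int_{B_R^c}|x|^2(\omega_0)^*\le\int_{B_R^c}|x|^2\omega_0\le\varepsilon$ by the first preliminary together with the standard fact that integration against the radially non-decreasing weight $|x|^2\mathbf{1}_{B_R^c}$ cannot increase under symmetric decreasing rearrangement of the other factor. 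Combining, $\|\omega(t)-\zeta\|_{L^1}\le\|\omega^{\mathrm{hi}}(t)\|_{L^1}+\|\omega^{\mathrm{lo}}(t)-\min((\omega_0)^*,M)\|_{L^1}+\|\min((\omega_0)^*,M)-\zeta\|_{L^1}\lesssim_{R,M}\sqrt\varepsilon$ for $\varepsilon\le1$. Feeding this into the two reductions of the first paragraph — $J(|\omega(t)-\zeta|)\le\varepsilon+2R^2\|\omega(t)-\zeta\|_{L^1}\lesssim_{R,M}\sqrt\varepsilon$, and, writing $\omega(t)-\zeta=\omega^{\mathrm{hi}}(t)+(\omega^{\mathrm{lo}}(t)-\zeta)$, $\|\omega^{\mathrm{hi}}(t)\|_{L^p}\le\varepsilon$ and $\|\omega^{\mathrm{lo}}(t)-\zeta\|_{L^p}\le(2M)^{1-1/p}\bigl(\|\omega^{\mathrm{hi}}(t)\|_{L^1}+\|\omega(t)-\zeta\|_{L^1}\bigr)^{1/p}\lesssim_{R,M}\varepsilon^{1/(2p)}$ — gives $\sup_{t\ge0}\|\omega(t)-\zeta\|_{{J}_p}\lesssim_{R,M}\varepsilon^{1/(2p)}$ for $\varepsilon\le1$, which with the crude bound for $\varepsilon\ge1$ is exactly \eqref{estimate}.

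The step I expect to be the main obstacle is making that right-hand side genuinely $O(\varepsilon)$. The quantitative bathtub inequality itself is elementary, but exploiting it forces one to (i) truncate at height $M$ so that the comparison ball $\{\min((\omega_0)^*,M)>s\}$ and the needed $L^\infty$ bound are available and time-independent — otherwise $\|\omega_0\|_{L^\infty}$, which the constant $C(R,M)$ must not see, reappears; (ii) use crucially that $\zeta$ vanishes outside $B_R$ in order to convert the far-field impulse of $\omega_0$ (and, through a monotone-weight rearrangement inequality, of $(\omega_0)^*$) into $\|\omega_0-\zeta\|_{{J}_p}$; and (iii) notice that truncation can only decrease the impulse relative to $J(\omega_0)$. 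It is this chain of explicit inequalities that replaces the contradiction–compactness argument behind Theorem \ref{thm1} and produces the concrete rate $\varepsilon^{1/(2p)}$.
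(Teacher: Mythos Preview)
Your proof is correct and follows essentially the same strategy as the paper: truncate $\omega(t)$ at a height depending only on $M$ to make the $L^\infty$ bound available, apply the quantitative rearrangement estimate $\|f-f^*\|_{L^1}^2\le 4\pi\|f\|_{L^\infty}[J(f)-J(f^*)]$ (your ``quantitative bathtub'' integrated in layers is exactly the paper's Lemma~\ref{lem2}), use $\zeta^*=\zeta$ together with the $L^1$-nonexpansivity of rearrangement, and finally upgrade to the $J_p$-norm. The differences are organizational: you truncate at $M$ rather than $M+1$, you build Lemma~\ref{lem2} from the set-level inequality via layer cake rather than stating it abstractly, and your impulse bound via $J(|\omega-\zeta|)=[J(\omega_0)-J(\zeta)]+2J((\zeta-\omega)_+)$ is a tidier substitute for the decomposition in the paper's Lemma~\ref{lemmain}; the paper, for its part, bounds the tail $\int_{B_R^c}|x|^2(\omega_0)^*$ by combining $J((\omega_0)^*)\le J(\omega_0)$ with an algebraic splitting rather than invoking, as you do, the monotone-weight rearrangement inequality $\int w f^*\le\int w f$ directly. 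None of this changes the substance.
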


As  in the case $1_D$, the key idea is that such a profile $\zeta$ 
minimizes the angular impulse $J$ of nonnegative functions $\xi$ having
 the same level set measure as $\zeta$;
$$|\{\xi>\alpha\}| =|\{\zeta>\alpha\}| ,\quad \alpha>0.$$
   We note that Marchioro-Pulvirenti \cite{MaP85} already showed $L^1-$stability of such monotone profiles  for any circular bounded domains $ B_{R} $, $ R>0$ (also see Burton  \cite[Theorem 3]{Burton2005}). The paper \cite{MaP85}   also showed 
   $ L^{1} $-stability  of  any monotone function $ \zeta(x,y)=\zeta(y) $   for bounded channels $ \mathbb{T}\times[0,R] $, $ R>0$.

Lastly, we consider the case when a monotone profile $\zeta$ is not necessarily compactly supported. In this case, we request, instead,  the profile $\zeta$ to have finite momentum of some higher order \eqref{assu_high}.
For instance, we obtain stability of a vorticity with the Gaussian distribution $e^{-|x|^2}$. 
Bassom-Gilbert \cite[Section 4]{Bassom} examined   the Gaussian profile numerically, which was motivated by   Lamb \cite[334a]{Lamb6th}. We also see 
Schecter-Dubin-Cass-Driscoll-Lansky-O'Neil
 \cite{Schecter} and references therein for linear analysis and   experiments on radial profiles. 

\begin{thm}\label{thm3}
	($ \text{supp }(\zeta) $ : not necessarily compact) Let $ \zeta\in L^{\infty}(\mathbb{R}^{2}) $ be nonnegative, radially symmetric, and non-increasing with
\begin{equation}\label{assu_high}
 \int_{\mathbb{R}^{2}}|x|^{6}\zeta dx<\infty,
\end{equation}		
	and let $ p\in[1,\infty) $. Then for $ \varepsilon>0 $, there exists $ \delta=\delta(\varepsilon,\zeta,p)>0 $ such that if a nonnegative $ \omega_{0}\in L^{\infty}(\mathbb{R}^{2}) $ with $ J(\omega_{0})<\infty $ satisfies
	\begin{equation}
		\nonumber \left\| \omega_{0}-\zeta\right\| _{{J}_{p}}\leq\delta,
	\end{equation}
	then the solution $ \omega(t) $ of \eqref{eq1} satisfies
	\begin{equation}
		\nonumber\sup_{t\geq0}\left\| \omega(t)-\zeta\right\| _{{J}_{p}}\leq\varepsilon.
	\end{equation}
\end{thm}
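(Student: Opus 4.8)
The plan is to mimic the contradiction argument that proves Theorem~\ref{thm1}, but to control the tail of the vorticity (which is no longer compactly supported) using the higher moment assumption \eqref{assu_high} rather than a uniform support bound. The central analytic fact I would rely on is the variational characterization already announced: among nonnegative functions with prescribed level-set measures, the radial non-increasing rearrangement $\zeta$ uniquely minimizes $J$. Concretely, I would first establish (or invoke from the body of the paper) a quantitative stability version: if $\xi\ge 0$ has the same distribution function as $\zeta$ and $J(\xi)$ is close to $J(\zeta)$, then $\xi$ is close to $\zeta$ in, say, $L^1$ and hence — by interpolation with the uniform $L^\infty$ bound coming from conservation of $\|\omega_0\|_{L^\infty}$ — in every $J_p$. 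Since the true solution $\omega(t)$ need not have exactly the same distribution as $\zeta$ (only $\omega_0$ is close to $\zeta$), I would pass through the equimeasurable rearrangement $\omega^\ast(t)$ of $\omega(t)$: conservation of all $L^p$ norms of $\omega(t)$ makes $\omega^\ast(t)$ stationary in distribution, close to $\zeta^\ast=\zeta$, and $J(\omega^\ast(t))\le J(\omega(t))=J(\omega_0)$, which is close to $J(\zeta)$. The gap between $\omega(t)$ and $\omega^\ast(t)$ then has to be absorbed, and this is where the moment hypothesis enters.

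The steps, in order, would be: (1) Reduce to a compactness/contradiction statement: suppose there is $\veps_0>0$ and a sequence $\omega_0^{(n)}\to\zeta$ in $J_p$ but with $\sup_{t}\|\omega^{(n)}(t)-\zeta\|_{J_p}>\veps_0$; pick times $t_n$ realizing (most of) this gap. (2) Record the conserved quantities: $\|\omega^{(n)}(t_n)\|_{L^q}\to\|\zeta\|_{L^q}$ for all $q\in[1,\infty]$ and $J(\omega^{(n)}(t_n))=J(\omega_0^{(n)})\to J(\zeta)<\infty$. (3) Show tightness at time $t_n$: the perturbation's second moment $J(|\omega^{(n)}(t_n)-\zeta|)$ is small. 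This is the delicate point — one direction, $\int|x|^2\omega^{(n)}(t_n)\le C$, is immediate from conservation of $J$, but to get genuine $J_2$-smallness one needs to rule out a small amount of mass escaping to spatial infinity while carrying almost no impulse is impossible because impulse is conserved; however a sliver of mass near infinity does cost a lot of impulse, so in fact conservation of $J$ plus conservation of $\|\cdot\|_{L^1}$ forces no mass to leak, giving tightness. (4) Use tightness + bounded higher moments. Here is where \eqref{assu_high} is used: the rearrangement $\omega^\ast(t_n)$ of $\omega(t_n)$ converges (by the previous two steps) to $\zeta^\ast=\zeta$ in $J_p$, and the finite sixth moment of $\zeta$, together with a uniform $|x|^6$-moment bound that I would propagate for $\omega^{(n)}(t_n)$ from the closeness of $\omega_0^{(n)}$ to $\zeta$ (note $|x|^6$ is \emph{not} conserved, so this requires an a priori estimate — see below), gives equi-integrability of $|x|^2\omega^{(n)}(t_n)$, upgrading weak convergence to $J_2$ and then $J_p$ convergence. (5) Apply the quantitative variational inequality (Proposition-type statement analogous to Proposition~\ref{prop1-1}) to $\omega^\ast(t_n)$: its distribution equals that of $\omega(t_n)$, which is asymptotically that of $\zeta$, and $J(\omega^\ast(t_n))\le J(\omega_0^{(n)})\to J(\zeta)$; uniqueness of the minimizer forces $\omega^\ast(t_n)\to\zeta$ in $J_p$. (6) Finally close the loop: combining $\omega(t_n)\to$ (something with $\zeta$'s distribution and minimal impulse, hence $=\zeta$) contradicts $\|\omega^{(n)}(t_n)-\zeta\|_{J_p}>\veps_0/2$.

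The main obstacle is step~(3)–(4): propagating control of the sixth moment $\int|x|^6\omega(t)\,dx$ — or at least enough of a higher-moment bound to get equi-integrability of $|x|^2\omega(t,x)$. Since the flow is incompressible but the trajectories can spread, $\int|x|^6\omega\,dx$ is not conserved; I would estimate its time derivative, $\frac{d}{dt}\int|x|^6\omega\,dx = \int 6|x|^4 x\cdot u\,\omega\,dx$, and bound $\|u\|_{L^\infty}$ or the relevant weighted quantity by the conserved norms of $\omega$ (Biot–Savart: $\|u\|_{L^\infty}\lesssim \|\omega\|_{L^1}^{1/2}\|\omega\|_{L^\infty}^{1/2}$), obtaining at worst linear-in-time growth of the sixth moment, which is \emph{not} good enough to be uniform in $t$. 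The resolution — and the reason \eqref{assu_high} is phrased with exponent $6$ rather than something larger — is presumably that one only needs the sixth moment at the single (large) time $t_n$ to be \emph{finite} (not small), and combines this qualitative finiteness with the \emph{quantitative} smallness of the second moment of the perturbation to interpolate: $J(|\omega(t_n)-\zeta|) = \int |x|^2|\omega(t_n)-\zeta|$ is small by step~(3), and for the rearrangement one uses that $\int|x|^6\zeta<\infty$ guarantees the tail of $\zeta$ decays fast, which is what actually drives equi-integrability of $|x|^2\zeta$ near infinity and — via the distributional closeness — of $|x|^2\omega^\ast(t_n)$. So the real work is checking that finite sixth moment of the \emph{limit} profile suffices, given that the second moment (impulse) of the \emph{solution} is exactly conserved; the perturbation's impulse being controlled is free, and the higher moment is only needed to convert weak-$*$ into strong $J_p$ convergence for the rearranged sequence.
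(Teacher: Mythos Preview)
Your proposal follows the contradiction/compactness template of Section~2, but the paper proves Theorem~\ref{thm3} by an entirely different, \emph{constructive} route: it first establishes (Lemma~\ref{lemmain}, built on the sharp rearrangement inequality $\|f-f^{\ast}\|_{L^1}^2\le 4\pi\|f\|_{L^\infty}[J(f)-J(f^\ast)]$ of Lemma~\ref{lem2} and the cut-off trick of Lemma~\ref{lem3}) an explicit estimate of the form
\[
\sup_{t\ge0}\|\omega(t)-\zeta\|_{J_p}\le C_3\Big[\|\omega_0-\zeta\|_{J_p}^{1/2p}+\|\omega_0-\zeta\|_{J_p}\Big]
+R^2 C_4\Big(\textstyle\int_{|x|>R}|x|^2\zeta\Big)^{1/2}+C_4\Big[\cdots\Big],
\]
valid for \emph{every} $R>0$, where $C_4$ depends only on $M=\|\zeta\|_{L^\infty}$. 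Theorem~\ref{thm3} then follows in three lines: first choose $R$ large so that the $\zeta$-tail terms are small (this is the only place \eqref{assu_high} enters, via $R^2(\int_{|x|>R}|x|^2\zeta)^{1/2}\le(\int_{|x|>R}|x|^6\zeta)^{1/2}\to 0$), then choose $\delta$ small.

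The crucial point you have inverted is the role of the sixth moment. It is a hypothesis on the \emph{profile} $\zeta$ only; the paper never touches $\int|x|^6\omega(t)\,dx$, and your worry about propagating it in time is a problem created by your choice of strategy, not by the theorem. The exponent $6$ is dictated by the particular structure of the constructive estimate (the $R^2$ prefactor arises because $J(|\omega-\zeta|)$ is bounded by $2R^2\|\omega-\zeta\|_{L^1}+\cdots$ and then the $L^1$-estimate feeds in a tail factor $(\int_{|x|>R}|x|^2\zeta)^{1/2}$), not by any equi-integrability argument for $\omega(t)$.

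Your variational route is not hopeless, but as written it has real gaps. You invoke ``the quantitative variational inequality (Proposition-type statement analogous to Proposition~\ref{prop1-1})'' for general $\zeta$, yet the paper's Section~2 is written exclusively for $1_D$: the admissible class $P'=\{0\le f\le 1,\ \|f\|_{L^1}=\pi\}$ and the uniqueness proof both use the patch structure, and neither Proposition~\ref{prop1-1} nor Proposition~\ref{prop2} is stated or proved for an arbitrary radial non-increasing $\zeta$. You would need to formulate the correct relaxed class (something like the weak closure of the rearrangement class of $\zeta$) and redo both the uniqueness and the compactness arguments there; this is plausible but is genuine additional work, not something to ``invoke from the body of the paper''. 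Separately, if one does carry out the contradiction argument carefully, tightness of $|x|^2\omega^{(n)}(t_n)$ actually follows from conservation of $J$ together with $L^2$-weak convergence on balls (mimicking Step~7 of Proposition~\ref{prop2}), so the sixth moment would likely not appear at all in that approach---another sign that your explanation of why \eqref{assu_high} is there does not match the actual mechanism.
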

\begin{rmk}
All our Theorems \ref{thm1}, \ref{thm2}, \ref{thm3} ask initial vorticity  $\omega_0$ to be \textit{nonnegative}. Removing the sign assumption looks highly non-trivial as long as we consider fluids in the whole space $\mathbb{R}^2$ (cf.  no sign condition results \cite{MaP85}, \cite{Burton2005} for  circular bounded domains). This is because 
our stability is essentially due to  the conservation of the  fluid  impulse $\int_{\mathbb{R}^2}|x|^2\omega(t,x)\,dx$.
Indeed,   when allowing negative part of $\omega$,  it might be possible   that some negative region might fly away (toward infinity) in time
(cf. see the example in 
Iftimie-Sideris-Gamblin
\cite[Sec. 3]{ISG}). In other words, we do not know a global-in-time bound on
 $\int_{\{\omega<0\}}|x|^2\omega(t,x)\,dx$.
Technically speaking, 
for a function on
the \textit{whole} space $\mathbb{R}^2$,  
the  {nonnegativity} is needed 
to define its  symmetric-decreasing rearrangement (see Definition \ref{defn1}).

\end{rmk}
For strictly monotone radial profiles in $\mathbb{R}^2$, Bedrossian-Coti Zelati-Vicol \cite{BeCZV19}  obtained the linear inviscid damping. 
We also mention asymptotic results for a point vortex by Coti Zelati-Zillinger \cite{CotiZill} and by
Ionescu-Jia \cite{IonJia}.
For the  infinite channel  $\mathbb{T}\times\mathbb{R}$, Beichman-Denisov \cite{BeD17} obtained stability for long rectangular patches. It is interesting that for      Couette flows in the same channel,    Bedrossian-Masmoudi \cite{BesMas} proved even the inviscid damping with   nonlinear asymptotic stability. For a bounded channel, we refer to Ionescu-Jia \cite{IonJia_cmp}.

Orbital  stability   of  other various vorticities 
can be found \textit{e.g.} in 
Tang \cite{Tang} on Kirchhoff’s ellipse, 
 Burton-Nussenzveig Lopes-Lopes Filho \cite{BNL13} on various dipoles,
   Cao-Wan-Wang \cite{CWW19} on patches for bounded domains,
\cite{AbC19} on   Lamb (Lamb-Chaplygin) dipole, and \cite{Ch20} on   Hill's spherical  vortex.

\subsection{
Key ideas}

 We   follow the strategy via the variational method based on \textit{vorticity} due to
 the classical papers by Kelvin \cite{Kelvin1880} and Arnold \cite{Arnold66} (also see the book of Arnold-Khesin \cite{AK98}). 
To go into detail and explain, the stability in this paper is closely related to the following properties of the symmetric-decreasing rearrangement of functions. The first is that the angular impulse of a nonnegative function $ f $ is greater than or equal to that of the rearrangement $ f^{\ast} $ (see Definition \ref{defn1}) of $ f $;
\begin{equation}
	J(f^{\ast})\leq J(f).\label{Jbasic}
\end{equation}
It is simply because the weight $ |x|^{2} $ is radially symmetric and monotonically increasing.  
On the other hand, any two functions having the same measure for each level set have the same rearrangement. By this, the rearrangement of the solution $ \omega(t) $ of the Euler flow stays the same as that of $ \omega_{0} $ throughout all time;
\begin{equation}
	\nonumber\Big(\omega(t)\Big)^{\ast}=(\omega_{0})^{\ast}\quad\text{ for all }\quad t\geq0.
\end{equation} 

The second is the nonexpansivity (Lemma 
\ref{lemnonexp})
of rearrangements for nonnegative functions;
	\begin{equation*}
		\left\| g^{\ast}-h^{\ast}\right\| _{L^{1}}\leq\left\| g-h\right\| _{L^{1}}.
	\end{equation*} It says that $L^1$-difference between two functions is non-increasing by replacing them with  their rearrangements. It is one of well-known properties of rearrangements. For instance, a proof can be found in Lieb-Loss
\cite[Sec. 3.5]{LiLAn}. 

The third is    the following estimate (Lemma \ref{lem2}) which is a sharper version of  \eqref{Jbasic}:
			\begin{equation*}
		\left\| f-f^{\ast}\right\| _{L^{1}}^{2}\leq4\pi\left\| f\right\| _{L^{\infty}}\bigg[ J(f)-J(f^{\ast}) \bigg].
	\end{equation*}
It  means that $ L^{1} $-difference between a function and its rearrangement can be controlled by the difference between their angular impulses.
Such a rearrangement estimate   appeared  in the paper \cite[Lemma 1]{MaP85} for bounded channel domains $\mathbb{T}\times[0,R],\, R>0$. For other fine properties of   rearrangements which are applicable to the Euler equations,
we   refer to a series of  works  from  Burton 
\cite{Burton1987}, \cite{Burton1989}, Burton-McLeod \cite{BurtonMcLeod} for bounded domains and  
Douglas \cite{Douglas} for unbounded domains.


We combine the above properties of rearrangements with the idea of cutting off a function to avoid using its supremum. 
It makes us to remove
the dependence on $ \left\| \omega_{0}\right\| _{L^{\infty}} $ in $ L^{1} $-estimate (Lemma \ref{lem3}). In addition, we use the fact that the rearrangement operation and the cut-off operation commute with each other; the rearrangement of a cut-off of a function is the same as the cut-off of the rearrangement.

For the organization of the rest, in Section 2, we prove Theorem \ref{thm1} via a contradiction argument. It is done by studying 
  $$\mbox{1. Existence: Lemma \ref{lem1-1}, \quad 2. Uniqueness: Proposition \ref{prop1-1}, \quad 3. Compactness: Proposition \ref{prop2}}$$
  of the variational problem \eqref{Var2}. 
In Section 3, we first show 
 $ L^{1} $-estimate \eqref{L1} in Lemma \ref{lem3}, which gives
 $ {J}_{p} $-estimate \eqref{mainestimate} in Lemma \ref{lemmain}. Then we have 
Theorems \ref{thm2} and \ref{thm3}.

Lastly, we recall again that Theorem \ref{thm1} is just a particular case of Theorem \ref{thm2}. Nevertheless, we decided to keep the proof of Theorem \ref{thm1}  in Section 2 because the variational method and the contradiction argument are more robust when  applying to more complicated settings
such as Lamb (Lamb-Chaplygin) dipole \cite{AbC19} or Hill's spherical vortex \cite{Ch20}.
However,  the constructive computations done in Section 3 seem not applicable to those complicated cases since they heavily rely on the explicit rearrangement estimate \eqref{keyestimate} in Lemma \ref{lem2}.  If one wants to cut to the chase toward Theorems \ref{thm2} and   \ref{thm3}, we recommend    to  skip Section 2  and to read Section 3.

We finish the introduction section by mentioning the \textit{easiest}   stability \eqref{simplest} of a circular patch for patch-type perturbations 
since 
it does not seem to be well-known in the community.  The current form \eqref{simplest} is due to 
\cite{SiV09} while \cite{Dr88}  also stated it in a convenient integral form.
For any patch-type solution $1_{\Omega_{t}}$ of \eqref{eq1},  the nonnegative quantity
$$ \int_{\Omega_{t}\bigtriangleup D}\left| |x|^{2}-1\right| dx$$ is conserved in time by the  direct computation:
\begin{equation}\begin{split}\label{simplest}
	 \int_{\Omega_{t}\bigtriangleup D}\left| |x|^{2}-1\right| dx&=\int_{\Omega_{t}\setminus D} (|x|^{2}-1) dx+\int_{D\setminus\Omega_{t}} (1-|x|^{2}) dx=\int_{\Omega_{t}} (|x|^{2}-1) dx+\int_{D} (1-|x|^{2}) dx\\
	 &=\int_{\Omega_{0}} (|x|^{2}-1) dx+\int_{D} (1-|x|^{2}) dx=\int_{\Omega_{0}\bigtriangleup D}\left| |x|^{2}-1\right| dx,\quad t\geq0.
\end{split}\end{equation}

\section{Variational method}


In this section, we use the notion of the symmetric-decreasing rearrangement of a function. To define this rearrangement rigorously, we first need the notion of the symmetric rearrangement of a measurable set with finite measure. Below we follow the definition in 
  the textbook \cite[Sec. 3.3]{LiLAn}.
\begin{defn}\label{defn1}
	(i) Let $ A\subset\mathbb{R}^{2} $ be a measurable set with $ |A|<\infty $. Then the symmetric rearrangement $ A^{\ast} $ of $ A $ is defined as $ B_{r} $ for some $ r\geq0 $ having the same measure as $ A $;
	\begin{equation}
		\nonumber A^{\ast}:= B_{r},\quad \pi r^{2}=|A|.
	\end{equation}
	
	(ii) Let $ f\in L^{p}(\mathbb{R}^{2}) $ for some $ p\in[1,\infty) $. Then the symmetric-decreasing rearrangement $ f^{\ast} $ of $ f $ is defined as the following;
	\begin{equation}
		\nonumber f^{\ast}(x):=\int_{0}^{\infty}1_{\left\lbrace y\in\mathbb{R}^{2} : |f(y)|>\xi\right\rbrace ^{\ast}}(x)d\xi,\quad x\in\mathbb{R}^{2}.
	\end{equation}
\end{defn}

\begin{rmk}
	It is easy to see that $ f^{\ast} $ is nonnegative, radially symmetric, non-increasing, and that each level set of $ f^{\ast} $ is the symmetric rearrangement of the level set of $ |f| $;
	\begin{equation}
		\left\lbrace x\in\mathbb{R}^{2} : f^{\ast}(x)>\alpha\right\rbrace =\left\lbrace x\in\mathbb{R}^{2} : |f(x)|>\alpha\right\rbrace ^{\ast}\quad\text{ for all }\quad \alpha>0.\label{levelset}
	\end{equation}
	So, we have
	\begin{equation}
		\left| \left\lbrace x\in\mathbb{R}^{2} : f^{\ast}(x)>\alpha\right\rbrace \right| =\left| \left\lbrace x\in\mathbb{R}^{2} : |f(x)|>\alpha\right\rbrace \right| \quad\text{ for all }\quad \alpha>0.\label{levelsetmeas}
	\end{equation}
	Also, each $ L^{q} $-norm of $ f^{\ast} $ for $ q\in[1,p] $ is preserved;
	\begin{equation}
		\left\| f^{\ast}\right\| _{L^{q}}=\left\| f\right\| _{L^{q}}.\label{Lp_preserve}
	\end{equation}
From now on, we simply call $ A^{\ast} $ and $ f^{\ast} $ the rearrangement of $ A $ and $ f $, respectively.
	
\end{rmk}

\begin{rmk}
	If a nonnegative $ f\in L^{\infty}(\mathbb{R}^{2}) $ satisfies $ J(f)<\infty $, as in the setting of our theorems, then we have $ f\in L^{p}(\mathbb{R}^{2}) $ for every $ p\in[1,\infty] $ because of
	\begin{equation*}
		\left\| f\right\| _{L^{1}}= \int_{D}fdx+\int_{\mathbb{R}^{2}\setminus D}fdx\leq\pi\left\| f\right\| _{L^{\infty}}+J(f),\label{L1bd}
	\end{equation*}
	and
	\begin{equation}
		\nonumber\left\| f\right\| _{L^{p}}^{p}\leq\left\| f\right\| _{L^{\infty}}^{p-1}\left\| f\right\| _{L^{1}},\quad p\in[1,\infty).
	\end{equation}
	So the rearrangement $ f^{\ast} $ is well-defined.
\end{rmk}
	
In this section, the basic property \eqref{Jbasic} of $ f^{\ast} $ is mainly used.

\subsection{Existence and uniqueness}

We introduce two admissible classes and the corresponding variational problems.

\begin{defn}\label{def1-1}
	We define admissible classes of functions by
	\begin{gather}
		\nonumber P=\left\lbrace f\in L^{1}(\mathbb{R}^{2}) : f=1_{\Omega}\quad\text{ for some measurable }\quad\Omega\subset\mathbb{R}^{2},\quad J(f)<\infty,\quad \left\| f\right\| _{L^{1}}=\pi\right\rbrace,\\
		\nonumber P'=\left\lbrace f\in L^{1}(\mathbb{R}^{2}) : 0\leq f\leq1,\quad J(f)<\infty,\quad \left\| f\right\| _{L^{1}}=\pi\right\rbrace.
	\end{gather}
	We also set variational problems
	\begin{gather}
		I=\inf_{f\in P}J(f),\label{Var1}\\
		I'=\inf_{f\in P'}J(f),\label{Var2}
	\end{gather}
	and denote sets of minimizers of the above problems by
	\begin{gather}
		\nonumber S=\left\lbrace f\in P : J(f)\leq J(g)\quad\text{ for all }\quad g\in P\right\rbrace ,\\
		\nonumber S'=\left\lbrace f\in P' : J(f)\leq J(g)\quad\text{ for all }\quad g\in P'\right\rbrace.
	\end{gather}
\end{defn}

\begin{rmk}
	Since the weight $ |x|^{2} $ of $ J $ is radially symmetric and monotonically increasing, it is obvious that the variational problem \eqref{Var1} has the unique minimizer $ 1_{D} $, that is,
	\begin{equation}
		S=\left\lbrace 1_{D}\right\rbrace \quad\text{ and }\quad I=J(1_{D})=\frac{\pi}{2}.\label{S=1D}
	\end{equation}
\end{rmk}

The following lemma says that the characteristic function $ 1_{D} $ of the unit disk is a minimizer even in the larger class $ P' $.

\begin{lem}[Existence]\label{lem1-1}
	\begin{equation}
		\nonumber I=I'\quad\text{ and }\quad 1_{D}\in S'.
	\end{equation}
\end{lem}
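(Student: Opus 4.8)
The plan is to show that $1_D$ minimizes $J$ over the enlarged class $P'$ by a symmetrization (rearrangement) argument combined with the obvious fact about $P$ recorded in \eqref{S=1D}. The inclusion $P\subset P'$ immediately gives $I'\leq I$, so the content is the reverse inequality $I\leq I'$, together with the claim that this common value is attained at $1_D$. Since $J(1_D)=\pi/2=I$ by \eqref{S=1D}, it suffices to prove $J(f)\geq \pi/2$ for every $f\in P'$; this yields both $I'=\inf_{P'}J\geq \pi/2=I$ and, combined with $I'\leq I$, the equality $I=I'$ as well as $1_D\in S'$ (because $1_D\in P'$ and $J(1_D)=\pi/2=I'$).

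First I would take an arbitrary $f\in P'$, i.e. $0\leq f\leq 1$, $\|f\|_{L^1}=\pi$, $J(f)<\infty$, and pass to its symmetric-decreasing rearrangement $f^\ast$. By \eqref{Jbasic} we have $J(f^\ast)\leq J(f)$, and by \eqref{Lp_preserve} we have $\|f^\ast\|_{L^1}=\|f\|_{L^1}=\pi$; moreover $0\leq f^\ast\leq 1$ since the rearrangement preserves the essential sup bound (each level set $\{f^\ast>\alpha\}$ is empty for $\alpha\geq 1$ by \eqref{levelsetmeas}). Thus $f^\ast\in P'$ as well, and it is radially symmetric and non-increasing: $f^\ast(x)=\psi(|x|)$ for some non-increasing $\psi:[0,\infty)\to[0,1]$. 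So it is enough to prove $J(g)\geq \pi/2$ for radial non-increasing $g=\psi(|x|)\in P'$.

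For such a radial profile the key step is a one-dimensional rearrangement/transport comparison: among all non-increasing $\psi:[0,\infty)\to[0,1]$ with $\int_0^\infty \psi(r)\,2\pi r\,dr=\pi$, the quantity $\int_0^\infty \psi(r)\,|x|^2$-integrated, namely $J(g)=\int_0^\infty\psi(r)\,2\pi r^3\,dr$, is minimized precisely by $\psi=1_{[0,1)}$. The cleanest way I would argue this is by the layer-cake (Cavalieri) representation: write $g=\int_0^1 1_{\{g>\alpha\}}\,d\alpha$ where each super-level set $\{g>\alpha\}$ is a disk $B_{r(\alpha)}$ with $\pi r(\alpha)^2=|\{g>\alpha\}|=:a(\alpha)$, so that $\int_0^1 a(\alpha)\,d\alpha=\|g\|_{L^1}=\pi$. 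Then $J(g)=\int_0^1 J(1_{B_{r(\alpha)}})\,d\alpha=\int_0^1 \tfrac{1}{2\pi}a(\alpha)^2\,d\alpha$, using $J(1_{B_r})=\int_{B_r}|x|^2\,dx=\tfrac{\pi}{2}r^4=\tfrac{1}{2\pi}(\pi r^2)^2$. Now apply Jensen's inequality to the convex function $t\mapsto t^2$ over the probability space $([0,1],d\alpha)$: $\int_0^1 a(\alpha)^2\,d\alpha\geq\big(\int_0^1 a(\alpha)\,d\alpha\big)^2=\pi^2$, so $J(g)\geq \tfrac{1}{2\pi}\cdot\pi^2=\tfrac{\pi}{2}$, with equality iff $a(\alpha)$ is a.e. constant $=\pi$, i.e. $g=1_D$. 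This gives $J(f)\geq J(f^\ast)=J(g)\geq \pi/2$ for all $f\in P'$, completing the proof.

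The only mildly delicate point — and the one I would be most careful about — is justifying the interchange $J(g)=\int_0^1 J(1_{\{g>\alpha\}})\,d\alpha$ and the identification of each level set with a disk; the former is Tonelli's theorem applied to the nonnegative integrand $|x|^2 1_{\{g(x)>\alpha\}}$ on $\mathbb{R}^2\times[0,1]$ (finiteness is guaranteed by $J(g)=J(f^\ast)\leq J(f)<\infty$), and the latter is immediate from the fact that $g$ is radial and non-increasing so its super-level sets are disks (or, for general $f\in P'$ first replaced by $f^\ast$, from \eqref{levelset}). Everything else is elementary. An alternative to the Jensen step, if one prefers to see the $1_D$-rigidity transparently, is the pointwise inequality $J(g)-\tfrac{\pi}{2}=\int_{\mathbb{R}^2}(|x|^2-1)(g(x)-1_D(x))\,dx\geq 0$, which holds because $g-1_D\geq 0$ where $|x|^2-1>0$ and $g-1_D\leq 0$ where $|x|^2-1<0$ (using that $g$ is non-increasing and $\int(g-1_D)=0$), with equality forcing $g=1_D$; this mirrors the computation \eqref{simplest} already in the paper and may be the slicker route.
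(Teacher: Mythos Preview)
Your proof is correct and takes a genuinely different, more streamlined route than the paper. After the same reduction to radial non-increasing $g\in P'$, the paper approximates $g$ by simple functions $g^{(n)}=\tfrac{1}{n}\sum_k 1_{A_k^{(n)}}$, replaces each level-set slice by an equimeasurable annulus of height $1$ hugging the origin, and verifies by explicit polar computation that this does not increase $J$; taking $n\to\infty$ then yields $J(1_{B_{\bar r^{(n)}}})\leq J(g^{(n)})\to J(g)$. Your layer-cake identity $J(g)=\tfrac{1}{2\pi}\int_0^1 a(\alpha)^2\,d\alpha$ together with Jensen bypasses this discretization entirely and is shorter. Your alternative ``bathtub'' argument, writing $J(f)-J(1_D)=\int(|x|^2-1)(f-1_D)\,dx\geq 0$, is slicker still: it needs neither the symmetrization step nor the radial reduction (only $0\leq f\leq 1$ and $\|f\|_{L^1}=\pi$), and it is exactly the continuous analogue of the computation \eqref{simplest} the paper records for patches. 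A further bonus of both of your arguments is that they deliver the equality case ($f=1_D$) for free, so they simultaneously establish Proposition~\ref{prop1-1}, which the paper proves separately by a perturbation/contradiction argument. The paper's hands-on approach is self-contained and avoids invoking Jensen, but yours is decidedly more economical.
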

This lemma implies that $ S $ is contained in $ S' $.

\begin{proof}
	By \eqref{S=1D} and by $ P\subset P' $, it's enough to show
	\begin{equation}
		J(1_{D})\leq J(f)\quad\text{ for all }\quad f\in P'.\label{J1D<=f}
	\end{equation}	
	Without loss of generality, we may assume $ f=f^{\ast} $, thanks to the basic property \eqref{Jbasic}.
	
	To show \eqref{J1D<=f}, for each $ n\in\mathbb{N} $, we define level sets $ A_{k}^{(n)} $ of $ f $ for $ k=1,\cdots,n $ by
	\begin{equation}
		A_{k}^{(n)}=\left\lbrace x\in\mathbb{R}^{2} : f(x)>\frac{k}{n}\right\rbrace,\label{FkGk}
	\end{equation}
	and set the simple function $ g^{(n)} $ by
	\begin{equation}
		g^{(n)}=\sum_{k=1}^{n-1}\frac{1}{n}1_{A_{k}^{(n)}}.\label{fngn}
	\end{equation}
	Then it forms a sequence of simple functions $ \left\lbrace g^{(n)}\right\rbrace _{n=1}^{\infty} $ that is dominated by $ f $ and converges to $ f $ pointwise. Thus, $ g^{(n)} $ satisfies
	\begin{equation}
		\left\| g^{(n)}\right\| _{L^{1}}\longrightarrow\left\| f\right\| _{L^{1}}=\pi\quad\text{ as }\quad n\longrightarrow\infty.\label{gnpi}
	\end{equation}
	For each $ n\in\mathbb{N} $, we set $ \overline{r}^{(n)}\geq0 $ by
	\begin{equation}
		|B_{\overline{r}^{(n)}}|=\left\| g^{(n)}\right\| _{L^{1}}.\label{Brngn}
	\end{equation}
	Then by \eqref{gnpi}, we have
	\begin{equation}
		\nonumber \overline{r}^{(n)}\longrightarrow 1\quad\text{ as }\quad n\longrightarrow\infty.
	\end{equation}
	We claim for each $ n $, we have
	\begin{align}
		J(1_{B_{\overline{r}^{(n)}}})\leq J(g^{(n)}).\label{1Brn}		
	\end{align}
	Once this claim is shown, then taking $ n\longrightarrow\infty $ on both sides of \eqref{1Brn} gives us \eqref{J1D<=f}.
	
	Let's prove the above claim. We fix $ n\in\mathbb{N} $ and for simplicity, we drop the parameter $ n $;
	\begin{equation}
		\nonumber g=g^{(n)},\quad A_{k}=A_{k}^{(n)},\quad \overline{r}=\overline{r}^{(n)}.
	\end{equation}	
	Because of the property \eqref{levelset} and $ f=f^{\ast} $ being radially symmetric and non-increasing, we have
	\begin{gather}
		\nonumber A_{k}\supset A_{k+1},\quad k=1,\cdots,n-1,\\
		\nonumber A_{k}= B_{s_{k}}\quad\text{ for some }\quad s_{k}\geq0\quad\text{ satisfying }\quad\pi s_{k}^{2}=|A_{k}|,\quad k=1,\cdots n.
	\end{gather}
	In particular, we have $ s_{n}=0 $ because $ A_{n} $ is the empty set. Furthermore, $ g $ can be rewritten as
	\begin{equation}
		g=\sum_{k=1}^{n-1}h_{k},\label{g=sumhk}
	\end{equation}
	where $ h_{k} $ for each $ k=1,\cdots,n-1 $ is given as
	\begin{equation}
		\nonumber h_{k}=\frac{k}{n}1_{A_{k}\setminus A_{k+1}}=\frac{k}{n}1_{\left\lbrace y\in\mathbb{R}^{2} : s_{k+1}\leq|y|<s_{k}\right\rbrace }.
	\end{equation}
	Then $ h_{k} $ has magnitude $ \frac{k}{n}\leq1 $ and satisfies
	\begin{equation}
		\nonumber\left\| h_{k}\right\| _{L^{1}}=\frac{k}{n}\int_{A_{k}\setminus A_{k+1}}1dx=\frac{\pi k}{n}(s_{k}^{2}-s_{k+1}^{2}), 
	\end{equation}
	\begin{equation}
		\nonumber J(h_{k})=\frac{k}{n}\int_{A_{k}\setminus A_{k+1}}|x|^{2}dx=\frac{2\pi k}{n}\int_{s_{k+1}}^{s_{k}}r^{3}dr=\frac{\pi k}{2n}(s_{k}^{4}-s_{k+1}^{4}).
	\end{equation}
	Additionally, we define a function $ h_{k}' $ by
	\begin{equation}
		\nonumber h_{k}'=1_{\left\lbrace y\in\mathbb{R}^{2} : s_{k+1}\leq|y|<c_{k}\right\rbrace },
	\end{equation}
	where $ c_{k}\geq0 $ is chosen to satisfy
	\begin{equation}
		\left\| h_{k}\right\| _{L^{1}}=\left\| h_{k}'\right\| _{L^{1}},\label{hkL1=hk'L1}
	\end{equation}
	that is,
	\begin{equation}
		\nonumber\frac{k}{n}\left| \left\lbrace y\in\mathbb{R}^{2} : s_{k+1}\leq|y|<s_{k}\right\rbrace \right| =\left| \left\lbrace y\in\mathbb{R}^{2} : s_{k+1}\leq|y|<c_{k}\right\rbrace \right|.
	\end{equation}
	More specifically, we have
	\begin{equation}
		\frac{\pi k}{n}(s_{k}^{2}-s_{k+1}^{2})=\pi(c_{k}^{2}-s_{k+1}^{2}),\label{ckrelation}
	\end{equation}
	which gives us
	\begin{equation}
		c_{k}=\sqrt{\left( 1-\frac{k}{n}\right)s_{k+1}^{2}+\frac{k}{n}s_{k}^{2} }.\label{ck}
	\end{equation}
	We observe that $ h_{k}' $ is the function which has greater magnitude compared to $ h_{k} $, the annulus with smaller outer radius, yet has the same inner radius and $ L^{1} $-norm. Then we have
	\begin{equation}
		J(h_{k})\geq J(h_{k}').\label{Jhk>=Jhk'}
	\end{equation}
	Indeed, this holds because using \eqref{ckrelation} and \eqref{ck}, we have
	\begin{align}
		\nonumber J(h_{k})-J(h_{k}')&=\frac{\pi k}{2n}(s_{k}^{4}-s_{k+1}^{4})-\frac{\pi}{2}(c_{k}^{4}-s_{k+1}^{4})\\
		\nonumber&=\frac{\pi k}{2n}(s_{k}^{2}+s_{k+1}^{2})(s_{k}^{2}-s_{k+1}^{2})-\frac{1}{2}\left[ \left(1-\frac{k}{n} \right) s_{k+1}^{2}+\frac{k}{n}s_{k}^{2}+s_{k+1}^{2}\right]\frac{\pi k}{n} (s_{k}^{2}-s_{k+1}^{2})\\
		\nonumber&=\frac{\pi k}{2n}\left( 1-\frac{k}{n}\right) (s_{k}^{2}-s_{k+1}^{2})^{2}\geq0.
	\end{align}
	Since $ h_{k}' $ is a characteristic function of an annulus for each $ k=1,\cdots,n-1 $, the rearrangement of the summation $ \left( \sum_{k=1}^{n-1}h_{k}'\right)  $ is the characteristic function of some disk; there exists $ \widehat{r}=\widehat{r}(n)\geq0 $ such that
	\begin{equation*}
		1_{B_{\widehat{r}}}=\left( \sum_{k=1}^{n-1}h_{k}'\right) ^{\ast}.\label{hksumstar}
	\end{equation*}
	Since the annuli $ \left\lbrace s_{k+1}\leq|x|<s_{k}\right\rbrace  $ are pairwise disjoint and the annuli $ \left\lbrace s_{k+1}\leq|x|<c_{k}\right\rbrace  $ satisfy the same as well, we get $ \overline{r}=\widehat{r} $, where $ \overline{r} $ is from the equation \eqref{Brngn}. Indeed, together with the preservation property \eqref{Lp_preserve} and the equation \eqref{hkL1=hk'L1}, we have
	\begin{align}
		\nonumber|B_{\overline{r}}|=\left\| g\right\| _{L^{1}}&=\left\| \sum_{k=1}^{n-1}h_{k}\right\| _{L^{1}}=\sum_{k=1}^{n-1}\left\| h_{k}\right\| _{L^{1}}=\sum_{k=1}^{n-1}\left\| h_{k}'\right\| _{L^{1}}=\left\| \sum_{k=1}^{n-1}h_{k}'\right\| _{L^{1}}=\left\| \left( \sum_{k=1}^{n-1}h_{k}'\right) ^{\ast}\right\| _{L^{1}}=|B_{\widehat{r}}|.
	\end{align}
	Finally by the basic property \eqref{Jbasic}, the equation \eqref{g=sumhk}, \eqref{Jhk>=Jhk'}, and the linearity of $ J $, we obtain \eqref{1Brn};
	\begin{align}
		\nonumber J(g)&=J\left( \sum_{k=1}^{n-1}h_{k}\right) =\sum_{k=1}^{n-1}J(h_{k})\geq\sum_{k=1}^{n-1}J(h_{k}')=J\left( \sum_{k=1}^{n-1}h_{k}'\right)\geq J\left( \left( \sum_{k=1}^{n-1}h_{k}'\right) ^{\ast}\right) =J(1_{B_{r}}).
	\end{align}
\end{proof}

Next, we show that $ 1_{D} $ is the unique minimizer of the variational problem \eqref{Var2}.

\begin{prop}[Uniqueness]\label{prop1-1}
	\begin{equation*}
		S'=\left\lbrace 1_{D}\right\rbrace .\label{S'=1D}
	\end{equation*}
\end{prop}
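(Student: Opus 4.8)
The plan is to deduce uniqueness from Lemma~\ref{lem1-1} together with a single pointwise sign observation, without ever passing to the rearrangement. By Lemma~\ref{lem1-1} and \eqref{S=1D} we already know $1_D\in S'$ and $I'=I=J(1_D)=\frac{\pi}{2}$, so it suffices to show that every minimizer $f\in S'$ equals $1_D$ almost everywhere. Fix such an $f$, so that $0\le f\le 1$, $\|f\|_{L^1}=\pi$, $J(f)<\infty$, and $J(f)=\frac{\pi}{2}$.

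The key step is to test the difference $1_D-f$ against the radial weight $1-|x|^2$, whose sign changes exactly across $\partial D$. I would first compute, using $\int_{B_1}(1-|x|^2)\,dx=\pi-\frac{\pi}{2}$ and $\int_{\mathbb{R}^2}(1-|x|^2)f\,dx=\|f\|_{L^1}-J(f)=\pi-\frac{\pi}{2}$,
\begin{equation}
\nonumber
\int_{\mathbb{R}^2}(1-|x|^2)\,\big(1_D(x)-f(x)\big)\,dx=\Big(\pi-\tfrac{\pi}{2}\Big)-\Big(\pi-\tfrac{\pi}{2}\Big)=0 .
\end{equation}
On the other hand, for $|x|<1$ we have $1-|x|^2>0$ and $1_D(x)-f(x)=1-f(x)\ge 0$, while for $|x|>1$ we have $1-|x|^2<0$ and $1_D(x)-f(x)=-f(x)\le 0$; hence the integrand $(1-|x|^2)(1_D-f)$ is nonnegative almost everywhere (the circle $|x|=1$ being a null set). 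An integrable nonnegative function with vanishing integral is zero a.e., so $(1-|x|^2)(1_D-f)=0$ a.e. Since $1-|x|^2\neq 0$ off the unit circle, this forces $f=1$ a.e.\ on $B_1$ and $f=0$ a.e.\ on $\mathbb{R}^2\setminus\overline{B_1}$, i.e.\ $f=1_D$ a.e. Combined with $1_D\in S'$, this yields $S'=\{1_D\}$.

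I do not expect a genuine obstacle here. The only thing to get right is the choice of test weight $1-|x|^2$: its sign matches that of $1_D-f$ precisely because the admissibility constraint $0\le f\le 1$ pins the sign of $1-f$ on $D$ and of $-f$ outside $D$. One may view this as the bathtub (or rearrangement) principle for the strictly increasing radial weight $|x|^2$ under a fixed $L^1$-mass constraint, but the displayed one-line identity is cleaner and makes uniqueness, not merely minimality, transparent. The one minor point to state carefully is that every integral in the identity above is finite, which is immediate from $J(f)<\infty$ and $f\in L^1(\mathbb{R}^2)$; in particular no use is made of $f=f^\ast$ or of $\|f\|_{L^\infty}$.
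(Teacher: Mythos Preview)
Your proof is correct and genuinely different from the paper's. The paper argues by contradiction via a \emph{perturbation} argument: assuming $|\{0<\phi<1\}|>0$, it constructs a compactly supported $\eta$ with $\int\eta=0$ and $J(\eta)>0$, so that $\phi-\varepsilon_0\eta\in P'$ has strictly smaller impulse; this forces $\phi$ to be a characteristic function, and then the paper invokes the (already proved) uniqueness in the patch class $S=\{1_D\}$. By contrast, you bypass the perturbation step entirely and get uniqueness in one stroke by testing $1_D-f$ against the weight $1-|x|^2$, which is exactly the bathtub principle: the pointwise constraint $0\le f\le 1$ forces $(1-|x|^2)(1_D-f)\ge 0$ a.e., while the mass and impulse equalities force the integral to vanish. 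Your argument is shorter and more transparent, needs no recourse to the patch minimizer $S=\{1_D\}$, and makes the role of the \emph{strict} monotonicity of $|x|^2$ (hence $1-|x|^2\neq 0$ off $\partial D$) explicit. The paper's approach, on the other hand, is more in the spirit of the variational machinery used elsewhere in Section~2 and generalizes readily to functionals where no such convenient test weight is available.
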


\begin{proof}
	The equation \eqref{S=1D} and Lemma \ref{lem1-1} show $ S=\left\lbrace 1_{D}\right\rbrace \subset S' $, so it suffices to show
	\begin{equation}
		\nonumber S'\subset S.
	\end{equation}
	We let $ \phi\in S' $. Due to $ I'=I $ by Lemma \ref{lem1-1}, we want
	\begin{equation*}
		\phi\in P.\label{phiP}
	\end{equation*}
	To prove this, we need to show
	\begin{equation}
		\nonumber\left| \left\lbrace x\in\mathbb{R}^{2} : 0<\phi(x)<1\right\rbrace \right| =0.
	\end{equation}
	For a contradiction, we suppose $ \left| \left\lbrace x\in\mathbb{R}^{2} : 0<\phi(x)<1\right\rbrace \right| >0. $ Then there exists $ \delta>0 $ such that
	\begin{equation}
		\left| \left\lbrace x\in\mathbb{R}^{2}:\delta\leq\phi(x)\leq1-\delta\right\rbrace \right| >0.\label{|A|>0}
	\end{equation}
	We let
	\begin{gather}
		\nonumber A=\left\lbrace x\in\mathbb{R}^{2}: \delta\leq\phi(x)\leq1-\delta\right\rbrace .
	\end{gather}
	Then we have $ 0<|A|<\infty $ and $ 0<J(1_{A})<\infty $ because $ \phi $ lies on $ P' $. To begin with, we fix a function $ h\in L^{\infty}(\mathbb{R}^{2}) $ that satisfies
	\begin{equation}
		\text{supp }(h)\subset A,\quad \int_{\mathbb{R}^{2}}hdx=1, \quad J(h)=0.\label{hprop}
	\end{equation}
	For instance, we can construct such $ h $ explicitly in the following way: First, take positive real numbers $ 0<r_{1}<r_{2}<\infty $ such that
	\begin{equation}
		\nonumber 0<|A\cap B_{r_{1}}|<\infty,\quad 0<|A\setminus B_{r_{2}}|<\infty.
	\end{equation}
	Second, take $ c_{1},c_{2}\in\mathbb{R} $ that satisfy
	\begin{gather}
		\begin{split}
			c_{1}|A\cap B_{r_{1}}|&+c_{2}|A\setminus B_{r_{2}}|=1,\\
			c_{1}J(1_{A\cap B_{r_{1}}})&+c_{2}J(1_{A\setminus B_{r_{2}}})=0.\label{c1c2}
		\end{split}
	\end{gather}
	Such $ c_{1},c_{2} $ always exist because the linear system \eqref{c1c2} has a non-singular matrix;
	\begin{align}
		\nonumber \begin{vmatrix}
			|A\cap B_{r_{1}}| & |A\setminus B_{r_{2}}|\\
			J(1_{A\cap B_{r_{1}}}) & J(1_{A\setminus B_{r_{2}}})
		\end{vmatrix}&=|A\cap B_{r_{1}}|\int_{A\setminus B_{r_{2}}}|x|^{2}dx-|A\setminus B_{r_{2}}|\int_{A\cap B_{r_{1}}}|x|^{2}dx\\
		\nonumber&\geq(r_{2}^{2}-r_{1}^{2})|A\cap B_{r_{1}}||A\setminus B_{r_{2}}|>0.
	\end{align}
	Finally, define $ h $ as
	\begin{equation}
		\nonumber h(x)=\begin{cases}
			c_{1} & \text{ if }x\in A\cap B_{r_{1}},\\
			c_{2} & \text{ if }x\in A\setminus B_{r_{2}},\\
			0 & \text{ otherwise}.
		\end{cases}
	\end{equation}
	Then we see such $ h $ satisfies the condition \eqref{hprop}.
	
	Now we define $ \eta\in L^{\infty}(\mathbb{R}^{2}) $ by
	\begin{equation}
		\nonumber\eta=1_{A}-|A|h.
	\end{equation}
	Then $ \eta $ is supported on $ A $ and it satisfies
	\begin{equation}
		\nonumber\int_{\mathbb{R}^{2}}\eta dx=|A|-|A|\int_{\mathbb{R}^{2}}hdx=0,
	\end{equation}
	\begin{equation}
		J(\eta)=J(1_{A})-|A|J(h)=J(1_{A})\in(0,\infty).\label{Jeta=J1A}
	\end{equation}	
	Now we take $ \varepsilon_{0}>0 $ small enough such that $ \varepsilon_{0}\left\|\eta \right\| _{L^{\infty}}\leq\delta $. Then we have
	\begin{equation}
		\nonumber 0\leq(\phi-\varepsilon_{0}\eta)\leq1.
	\end{equation}
	In addition, we also have
	\begin{equation}
		\nonumber J(\phi-\varepsilon_{0}\eta)=J(\phi)-\varepsilon_{0} J(\eta)<\infty,\quad\int_{\mathbb{R}^{2}}(\phi-\varepsilon_{0}\eta)dx=\int_{\mathbb{R}^{2}}\phi dx=\pi,
	\end{equation}
	which shows $ (\phi-\varepsilon_{0}\eta)\in P' $. Then due to \eqref{Jeta=J1A} and the initial assumption $ \phi\in S' $, we have
	\begin{align}
		\nonumber J(\phi)\leq J(\phi-\varepsilon_{0}\eta)=J(\phi)-\varepsilon_{0} J(\eta)=J(\phi)-\varepsilon_{0} J(1_{A}).
	\end{align}
	This implies
	\begin{equation}
		\nonumber -\varepsilon_{0}J(1_{A})\geq0,
	\end{equation}
	which gives us
	$
		J(1_{A})=0.
	$
	Thus, we have $ |A|=0 $, which is a contradiction to \eqref{|A|>0}.	
\end{proof}

\subsection{Compactness}

In this subsection, we prove that a sequence of functions which has norm convergences with the convergence
$$ J(f_{n})\longrightarrow J(1_{D})$$
 should have strong convergences.

\begin{prop}\label{prop2}
	Let $ \left\lbrace f_{n}\right\rbrace_{n=1}^{\infty} \subset L^{\infty}(\mathbb{R}^{2}) $ be a sequence of nonnegative functions such that
	\begin{gather}
		\int_{\left\lbrace x\in\mathbb{R}^{2}: f_{n}(x)\geq1+a_{n}\right\rbrace }f_{n}dx\longrightarrow0\quad\text{ for some sequence }\left\lbrace a_{n}\right\rbrace_{n=1}^{\infty}\subset\mathbb{R}_{>0} \text{ satisfying }a_{n}\searrow0,\label{fn<=1}\\
		\left\| f_{n}\right\| _{L^{1}}\longrightarrow\left\| 1_{D}\right\| _{L^{1}},\label{L1conv}\\
		\left\| f_{n}\right\| _{L^{2}}\longrightarrow\left\| 1_{D}\right\| _{L^{2}},\label{L2conv}\\
		J(f_{n})\longrightarrow J(1_{D})\quad\text{ as }\quad n\longrightarrow\infty.\label{Jconv}
	\end{gather}
	Then $ \left\lbrace f_{n}\right\rbrace  $ satisfies
	\begin{gather}
		\nonumber \left\| f_{n}- 1_{D}\right\| _{L^{1}}+\left\| f_{n}- 1_{D}\right\| _{L^{2}}+J(|f_{n}- 1_{D}|) \longrightarrow0\quad\text{ as }\quad n\longrightarrow\infty.
	\end{gather}
\end{prop}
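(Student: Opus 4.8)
The plan is a standard compactness-via-uniqueness argument: extract a weak limit of $\{f_n\}$, identify it with $1_D$ through the uniqueness statement of Proposition \ref{prop1-1}, and then promote the weak convergence to strong convergence in each of the three norms by using that $L^2(\mathbb{R}^2)$ is a Hilbert space together with the hypothesized norm convergences \eqref{L1conv} and \eqref{L2conv}.

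First I would observe that $\{f_n\}$ is bounded in $L^1\cap L^2$ by \eqref{L1conv}--\eqref{L2conv} and that $\sup_n J(f_n)<\infty$ by \eqref{Jconv}; in particular $\int_{|x|>R}f_n\,dx\le R^{-2}J(f_n)$ gives uniform tightness of $\{f_n\}$. Hence, along a subsequence, $f_n\rightharpoonup f$ weakly in $L^2(\mathbb{R}^2)$ for some nonnegative $f$, and testing against $1_{B_R}$ and $|x|^2 1_{B_R}$ and letting $R\to\infty$ (using tightness and monotone convergence) yields $\|f\|_{L^1}=\pi$ and $J(f)\le\liminf_n J(f_n)=\pi/2$.

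Next I would show $0\le f\le1$, which puts $f$ in the admissible class $P'$ with $J(f)\le I'=\pi/2$ (using Lemma \ref{lem1-1}), so that Proposition \ref{prop1-1} forces $f=1_D$; since this holds along any subsequence, the whole sequence satisfies $f_n\rightharpoonup1_D$ weakly in $L^2$. Nonnegativity is clear; the bound $f\le1$ is exactly where hypothesis \eqref{fn<=1} is used. For a fixed $\varepsilon>0$, once $a_n<\varepsilon$ one has $\|(f_n-1-\varepsilon)_+\|_{L^1}\le\int_{\{f_n\ge1+a_n\}}f_n\,dx\to0$, so the truncations $\min(f_n,1+\varepsilon)=f_n-(f_n-1-\varepsilon)_+$, which are bounded in $L^2$, converge weakly (along a further subsequence) to the same limit $f$; being weak limits of functions bounded above by $1+\varepsilon$, they force $f\le1+\varepsilon$ a.e., and letting $\varepsilon\downarrow0$ gives $f\le1$. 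I expect this to be the only genuine obstacle: the norm convergences \eqref{L1conv}--\eqref{Jconv} by themselves do not prevent mass from accumulating at density exceeding $1$ in the weak limit (a constant sequence $f_n\equiv f$ with $\int f^2\,dx=\int f\,dx=\pi$, $J(f)=\pi/2$ but $f\not\le1$ would otherwise survive and violate the conclusion), and \eqref{fn<=1} is precisely what excludes it.

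Finally I would upgrade to strong convergence. Since $1_D\in L^2$, the weak convergence gives $\int_D f_n\,dx\to\pi$, hence $\|f_n-1_D\|_{L^2}^2=\|f_n\|_{L^2}^2-2\int_D f_n\,dx+\pi\to\pi-2\pi+\pi=0$ by \eqref{L2conv}. For the $L^1$ part, write $\|f_n-1_D\|_{L^1}=\int_D|f_n-1|\,dx+\int_{\mathbb{R}^2\setminus D}f_n\,dx$, bound the first term by $\sqrt\pi\,\|f_n-1_D\|_{L^2}\to0$ via Cauchy--Schwarz, and note the second equals $\|f_n\|_{L^1}-\int_D f_n\,dx\to0$. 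For the impulse, $J(|f_n-1_D|)=\int_D|x|^2|f_n-1|\,dx+\int_{\mathbb{R}^2\setminus D}|x|^2 f_n\,dx$, where the first term is at most $\int_D|f_n-1|\,dx\to0$, and the second equals $J(f_n)-\int_D|x|^2 f_n\,dx\to\pi/2-\pi/2=0$, using that $\int_D|x|^2 f_n\,dx\to\int_D|x|^2\,dx=\pi/2$ (its distance from $\pi/2$ being at most $\int_D|f_n-1|\,dx\to0$). Summing the three pieces gives the conclusion; everything past the identification step is routine Hilbert-space and Cauchy--Schwarz bookkeeping.
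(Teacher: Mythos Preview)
Your proof is correct and follows essentially the same route as the paper: extract a weak $L^2$ limit, use tightness from the $J$-bound to get $\|f\|_{L^1}=\pi$ and $J(f)\le\pi/2$, use hypothesis \eqref{fn<=1} to force $f\le 1$, invoke Proposition~\ref{prop1-1} to identify $f=1_D$, and then upgrade to strong convergence via norm convergence in $L^2$ plus the splitting over $D$ and $\mathbb{R}^2\setminus D$. The only cosmetic differences are that the paper wraps everything in an outer contradiction argument and proves $f\le 1$ by a direct contradiction on a level set rather than through truncations, but the content is the same.
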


\begin{proof}
	Suppose that the conclusion is false, that is, there exists $ \varepsilon_{0}>0 $ and a subsequence $ \left\lbrace f_{n_{j}}\right\rbrace_{j=1}^{\infty} $ of $ \left\lbrace f_{n}\right\rbrace _{n=1}^{\infty} $ such that
	\begin{equation}
		\left\| f_{n_{j}}- 1_{D}\right\| _{L^{1}}+\left\| f_{n_{j}}- 1_{D}\right\| _{L^{2}}+J\left( \left| f_{n_{j}}- 1_{D}\right| \right) \geq\varepsilon_{0}\quad\text{ for all }\quad j\in\mathbb{N}.\label{cont}
	\end{equation}
	Now we show that there exists a subsequence $ \left\lbrace f_{n_{j_{k}}}\right\rbrace_{k=1}^{\infty} $ of $ \left\lbrace f_{n_{j}}\right\rbrace_{j=1}^{\infty} $ such that
	\begin{equation*}
		\left\| f_{n_{j_{k}}}- 1_{D}\right\| _{L^{1}}+\left\| f_{n_{j_{k}}}- 1_{D}\right\| _{L^{2}}+J\left( \left| f_{n_{j_{k}}}- 1_{D}\right| \right) \longrightarrow0\quad\text{ as }\quad k\longrightarrow\infty.\label{fnjk}
	\end{equation*}
	Once this is shown, then it contradicts \eqref{cont}, completing the proof.
	
	Due to
	\begin{equation}
		\nonumber\sup_{j\in\mathbb{N}}\left\| f_{n_{j}}\right\| _{L^{2}}<\infty,
	\end{equation}
	which comes from the $ L^{2} $-norm convergence \eqref{L2conv}, there exists $ f\in L^{2}(\mathbb{R}^{2}) $ and a subsequence $ \left\lbrace f_{n_{j_{k}}}\right\rbrace _{k=1}^{\infty} $ of $ \left\lbrace f_{n_{j}}\right\rbrace _{j=1}^{\infty} $ such that
	\begin{equation}
		f_{n_{j_{k}}}\rightharpoonup f\quad\text{ in }\quad L^{2}(\mathbb{R}^{2})\quad\text{ as }\quad k\longrightarrow\infty.\label{weakL2}
	\end{equation}
	For notational convenience, we simply denote the subsequence $ \left\lbrace f_{n_{j_{k}}}\right\rbrace _{k=1}^{\infty} $ by $ \left\lbrace f_{n}\right\rbrace  $. The proof is done in 8 steps:\\
	
	\textbf{Step 1.} $ 0\leq f\leq1 $.
	
	\textbf{Step 2.} $ \left\| f\right\| _{L^{1}}\leq\left\| 1_{D}\right\| _{L^{1}}\text{ and } J(f)\leq J(1_{D}) $.
	
	\textbf{Step 3.} $ \left\| f\right\| _{L^{1}}=\left\| 1_{D}\right\| _{L^{1}}. $
	
	\textbf{Step 4.} $ J(f)=J(1_{D}) $.
	
	\textbf{Step 5.} $ f=1_{D} $.
	
	\textbf{Step 6.} $ f_{n}\longrightarrow 1_{D} $ in $ L^{2} $ as $ n\longrightarrow\infty $.
	
	\textbf{Step 7.} $ J(|f_{n}-1_{D}|)\longrightarrow0 $ as $ n\longrightarrow\infty $.
	
	\textbf{Step 8.} $ f_{n}\longrightarrow 1_{D} $ in $ L^{1} $ as $ n\longrightarrow\infty $.\\
	
	Step 1, 2, and 3 are to show $ f\in P' $. Then Step 4 and Proposition \ref{prop1-1} show Step 5. This step confirms that the $ L^{2} $-weak limit $ f $ is indeed the unique minimizer $ 1_{D} $ in $ P' $. The remaining steps finish our proof.\\
	
	\textbf{Step 1.} $ 0\leq f\leq1 $.\\
	
	We trivially have $ f\geq0 $ due to $ f_{n}\geq0 $ for each $ n\in\mathbb{N} $. To prove $ f\leq1 $, we show
	\begin{equation}
		\nonumber\left| \left\lbrace x\in\mathbb{R}^{2} : f(x)>1\right\rbrace \right| =0.\label{f>1=0}
	\end{equation}
	For a contradiction, we suppose
	\begin{equation}
		\nonumber\left| \left\lbrace x\in\mathbb{R}^{2} : f(x)>1\right\rbrace\right| >0.\label{fx>1>0}
	\end{equation}
	Then there exists $ \delta>0 $ such that
	\begin{equation}
		0<\left| \left\lbrace x\in\mathbb{R}^{2} : f(x)\geq1+\delta\right\rbrace \right|<\infty,\label{1+delta}
	\end{equation}
	where the measure is finite because of $ f\in L^{2}(\mathbb{R}^{2}) $. We denote
	\begin{equation}
		\nonumber C=\left\lbrace x\in\mathbb{R}^{2} : f(x)\geq1+\delta\right\rbrace,\quad E_{n}=\left\lbrace x\in\mathbb{R}^{2} :  f_{n}(x)\geq1+a_{n}\right\rbrace.
	\end{equation}
	Then we have
	\begin{equation*}
		\int_{C} f dx\geq\int_{C} \left( 1+\delta\right) dx=\left( 1+\delta\right) \left|C\right| ,\label{fdx}
	\end{equation*}
	and
	\begin{equation*}
		\begin{split}
			\int_{C}f_{n}dx&=\int_{C\cap E_{n}}f_{n}dx+\int_{C\setminus E_{n}}f_{n}dx\leq\int_{E_{n}}f_{n}dx+\int_{C\setminus E_{n}}\left( 1+ a_{n}\right) dx\leq\int_{E_{n}}f_{n}dx+\left( 1+ a_{n}\right)\left|C\right|.\label{fndx}
		\end{split}		
	\end{equation*}
	Then we obtain
	\begin{align}
		\int_{C}\left( f_{n}-f\right) dx&\leq\int_{E_{n}}f_{n}dx+\left( a_{n}-\delta\right)\left|C\right|.\label{fn-f}
	\end{align}
	Then the left-hand side of \eqref{fn-f} goes to $ 0 $ because of the $ L^{2}$-weak convergence \eqref{weakL2} and the finite-measured set \eqref{1+delta}, and the right-hand side of \eqref{fn-f} converges to $ -\delta\left|C\right| $ due to \eqref{fn<=1} as $ n\longrightarrow\infty $. This gives 
	 a contradiction to \eqref{1+delta}.\\
	
	\textbf{Step 2.} $ \left\| f\right\| _{L^{1}}\leq\left\| 1_{D}\right\| _{L^{1}}, J(f)\leq J(1_{D}) $.\\
	
	We observe
	\begin{equation}
		\int_{\mathbb{R}^{2}}f_{n}1_{B_{r}}dx=\int_{B_{r}}f_{n}dx\leq\left\| f_{n}\right\| _{L^{1}},\quad r>0.\label{Step2-1}
	\end{equation}
	Then because of the $ L^{1} $-norm convergence \eqref{L1conv} and the $ L^{2} $-weak convergence \eqref{weakL2}, we have by taking $ n\longrightarrow\infty $ on both sides of \eqref{Step2-1},
	\begin{equation*}
		\int_{B_{r}}fdx\leq\left\| 1_{D}\right\| _{L^{1}},\quad r>0.\label{Step2-2}
	\end{equation*}
	Then we have
	\begin{align}
		\nonumber \left\| f\right\| _{L^{1}}&=\lim\limits_{r\rightarrow\infty}\int_{\mathbb{R}^{2}}f1_{B_{r}}dx=\lim\limits_{r\rightarrow\infty}\int_{B_{r}}fdx\leq\left\| 1_{D}\right\| _{L^{1}}.
	\end{align}	
	
	Similarly, by replacing $ 1_{B_{r}} $ with $ |x|^{2}1_{B_{r}}\in L^{2}(\mathbb{R}^2) $, the convergence \eqref{Jconv}  of angular impulse and the $ L^{2} $-weak convergence \eqref{weakL2} produce
	\begin{align}
		J(f)\leq J(1_{D}).\label{Jf<=J1D}
	\end{align}
	
	\textbf{Step 3.} $ \left\| f\right\| _{L^{1}}=\left\| 1_{D}\right\| _{L^{1}}. $\\
	
	To show this, we fix $ \varepsilon>0 $. Then because of $ f\in L^{1}(\mathbb{R}^{2}) $, there exists $ R_{1}>0 $ such that
	\begin{equation}
		\int_{\mathbb{R}^{2}\setminus B_{R}}fdx\leq\varepsilon\quad\text{ for all }\quad R\geq R_{1}.\label{eps1}
	\end{equation}
	Furthermore, by the convergence  \eqref{Jconv} of angular impulse, we have
	\begin{equation}
		\nonumber \sup_{n\in\mathbb{N}}J(f_{n})<\infty.
	\end{equation}
	Let's denote $ M=\sup_{n\in\mathbb{N}}J(f_{n}) $. Then due to
	\begin{gather*}
		M\geq J(f_{n})\geq\int_{\mathbb{R}^{2}\setminus B_{r}}|x|^{2}f_{n}dx\geq r^{2}\int_{\mathbb{R}^{2}\setminus B_{r}}f_{n}dx\quad\text{ for all }\quad n\in\mathbb{N}\quad\text{ and }\quad r>0,\label{M>=Jfn}
	\end{gather*}
	we get
	\begin{equation}
		\nonumber\frac{M}{r^{2}}\geq\sup_{n\in\mathbb{N}}\int_{\mathbb{R}^{2}\setminus B_{r}}f_{n}dx\quad\text{ for all }\quad r>0.
	\end{equation}
	So choosing $ R_{2}\geq\sqrt{\frac{M}{\varepsilon}} $, we have
	\begin{equation}
		\sup_{n\in\mathbb{N}}\int_{\mathbb{R}^{2}\setminus B_{R}}f_{n}dx\leq\varepsilon\quad\text{ for all }\quad R\geq R_{2}.\label{eps2}
	\end{equation}
	We take $ R=\max\left\lbrace R_{1},R_{2}\right\rbrace  $. Then because of $ 1_{B_{R}}\in L^{2}(\mathbb{R}^{2}) $ and the $ L^{2} $-weak convergence \eqref{weakL2}, there exists $ N\in\mathbb{N} $ such that
	\begin{equation}
		\left| \int_{B_{R}}f_{n}dx-\int_{B_{R}}fdx\right| \leq\varepsilon\quad\text{ for all }\quad n\geq N.\label{eps3}
	\end{equation}
	By collecting all the estimates \eqref{eps1}, \eqref{eps2}, and \eqref{eps3}, we have, for all $ n\geq N $, 
	\begin{align}
		\nonumber \Bigl| \left\| f_{n}\right\| _{L^{1}}-\left\| f\right\| _{L^{1}}\Bigr| &\leq\left| \int_{B_{R}}f_{n}dx-\int_{B_{R}}fdx\right| +\left| \int_{\mathbb{R}^{2}\setminus B_{R}}f_{n}dx-\int_{\mathbb{R}^{2}\setminus B_{R}}fdx\right| \\
		\nonumber &\leq\left| \int_{B_{R}}f_{n}dx-\int_{B_{R}}fdx\right| +\int_{\mathbb{R}^{2}\setminus B_{R}}f_{n}dx+\int_{\mathbb{R}^{2}\setminus B_{R}}fdx\\
		\nonumber&\leq\varepsilon+\varepsilon+\varepsilon=3\varepsilon.
	\end{align}
	Hence by the $ L^{1} $-norm convergence \eqref{L1conv}, we have
	\begin{equation}
		\nonumber \left\| f\right\| _{L^{1}}=\lim\limits_{n\rightarrow\infty}\left\| f_{n}\right\| _{L^{1}}=\left\| 1_{D}\right\| _{L^{1}}.
	\end{equation}
	
	\textbf{Step 4.} $ J(f)=J(1_{D}) $.\\
	
	Step 1, 2, and 3 show
	\begin{equation}
		f\in P'.\label{finP'}
	\end{equation}
	Then by Proposition \ref{prop1-1}, we have
	\begin{equation}
		\nonumber J(f)\geq I'=J(1_{D}).
	\end{equation}
	Then by the inequality \eqref{Jf<=J1D} in Step 2, we obtain
	\begin{equation}
		J(f)=J(1_{D}).\label{Jf=J1D}
	\end{equation}
	
	\textbf{Step 5.} $ f=1_{D} $.\\
	
	By \eqref{finP'} and by \eqref{Jf=J1D},   Proposition \ref{prop1-1} implies 
	\begin{equation}
		f\in S'=\left\lbrace 1_{D}\right\rbrace .\label{f=1D}
	\end{equation}	
	Up to now, we have shown that the $ L^{2} $-weak limit $ f $ is exactly $ 1_{D} $. From now on, we show that the weak limit is, in fact, the strong limit.\\
	
	\textbf{Step 6.} $ f_{n}\longrightarrow 1_{D} $ in $ L^{2} $ as $ n\longrightarrow\infty $.\\
	
	From the $ L^{2} $-weak convergence \eqref{weakL2} and \eqref{f=1D} in Step 5, we have
	$$ f_{n}\rightharpoonup 1_{D} \quad\text{ in }\quad L^{2}\quad\text{ as }\quad n\longrightarrow\infty. $$
	Then this together with the $ L^{2} $-norm convergence \eqref{L2conv} gives us
	\begin{equation}
		f_{n}\longrightarrow 1_{D} \quad\text{ in }\quad L^{2}\quad \text{ as }\quad n\longrightarrow\infty.\label{strongL2}
	\end{equation}
	
	\textbf{Step 7.} $ J(|f_{n}-1_{D}|)\longrightarrow0 $ as $ n\longrightarrow\infty $.\\
	
	To show this, we split the range of the integral of $ J(|f_{n}-1_{D}|) $ into $ D $ and $ \mathbb{R}^{2}\setminus D $:
	\begin{align}
		\nonumber J(|f_{n}-1_{D}|)&=\underbrace{\int_{D}|x|^{2}|f_{n}-1_{D}|dx}_{(I)}+\underbrace{\int_{\mathbb{R}^{2}\setminus D}|x|^{2}f_{n}dx}_{(II)}.
	\end{align}
	For $ (I) $, using the H\"{o}lder's inequality, by the strong $ L^{2} $-convergence \eqref{strongL2} in Step 6, we have
	\begin{align}
		\nonumber (I)&\leq\left( \int_{\mathbb{R}^{2}}|f_{n}-1_{D}|^{2}dx\right) ^{\frac{1}{2}}\left( \int_{\mathbb{R}^{2}}|x|^{4}|1_{D}|^{2}dx\right) ^{\frac{1}{2}}\leq C\left\| f_{n}-1_{D}\right\| _{L^{2}}\longrightarrow0\quad\text{ as }\quad n\longrightarrow\infty.
	\end{align}
	To show the same holds for $ (II) $, we use the following decomposition;
	\begin{align}
		\nonumber \int_{\mathbb{R}^{2}\setminus D}|x|^{2}f_{n}dx&=\Big(J(f_{n})-J(1_{D})\Big)-\left( \int_{D}|x|^{2}f_{n}dx-\int_{D}|x|^{2}1_{D}dx\right) .
	\end{align}
	Again using the H\"{o}lder's inequality, by the convergence \eqref{Jconv} of angular impulse  and by \eqref{strongL2}, we have
	\begin{align}
		\nonumber(II)&\leq \left| J(f_{n})-J(1_{D})\right|+\left|  \int_{D}|x|^{2}f_{n}dx-\int_{D}|x|^{2}1_{D}dx\right|\\
		\nonumber&\leq\left| J(f_{n})-J(1_{D})\right|+C\left\| f_{n}-1_{D}\right\| _{L^{2}}\longrightarrow0\quad\text{ as }\quad n\longrightarrow\infty.
	\end{align}
	Therefore, we have $ J(|f_{n}-1_{D}|)\longrightarrow0 $ as $ n\longrightarrow\infty $.\\
	
	\textbf{Step 8.} $ f_{n}\longrightarrow 1_{D} $ in $ L^{1} $ as $ n\longrightarrow\infty $.\\
	
	By the H\"{o}lder's inequality, the $ L^{1} $-norm can be controlled by the $ {J}_{2} $-norm as the following:
	\begin{equation}
		\left\|g\right\| _{L^{1}}=\int_{D}|g|dx+\int_{\mathbb{R}^{2}\setminus D}|g|dx\leq\sqrt{\pi}\left\|g\right\| _{L^{2}}+J(|g|)\leq\pi\left\| g\right\| _{{J}_{2}},\quad g\in L^{2},\quad J(|g|)<\infty.\label{L1L2Lw1}
	\end{equation}
	Then by Step 6, 7, and the above with $ g=f_{n}-1_{D} $, we have $ \left\| f_{n}-1_{D}\right\| _{L^{1}}\longrightarrow0 $ as $ n\longrightarrow\infty $.
\end{proof}

\subsection{Contradiction argument}

\begin{proof}[Proof of Theorem \ref{thm1}]
	
	We suppose that the conclusion is false. Then there exists $ \varepsilon_{0}>0 $ such that there exists a sequence of nonnegative initial data $ \left\lbrace \omega_{n,0} \right\rbrace_{n=1}^{\infty}\subset L^{\infty}(\mathbb{R}^{2}) $ with $ J(\omega_{n,0})<\infty $ and a sequence $ \left\lbrace t_{n}\right\rbrace _{n=1}^{\infty}\subset\mathbb{R}_{\geq0} $ such that for each $ n\in\mathbb{N} $, we have
	\begin{equation}
		\left\| \omega_{n,0}-1_{D}\right\| _{{J}_{2}}\leq\frac{1}{n},\label{eq8}
	\end{equation}
	but
	\begin{equation}
		\left\| \omega_{n}(t_{n})-1_{D}\right\| _{{J}_{2}}\geq\varepsilon_{0},\label{eq9}
	\end{equation}
	where $ \omega_{n}(t) $ is the solution of \eqref{eq1} for $ \omega_{n,0} $. First, we recall that for each $ n $, the $ L^{p} $-norm for $ p\in[1,\infty] $ and $ J\left( \omega_{n}(t)\right)  $ are preserved in time. This gives us
	\begin{equation}
		\left\| \omega_{n}(t_{n})\right\| _{L^{1}}=\left\| \omega_{n,0}\right\| _{L^{1}},\quad \left\| \omega_{n}(t_{n})\right\| _{L^{2}}=\left\| \omega_{n,0}\right\| _{L^{2}},\quad J(\omega_{n}(t_{n}))=J(\omega_{n,0}).\label{conserved}
	\end{equation}
	Also we recall that for each $ n $, the corresponding flow for $ \omega_{n}(t) $ preserves the measure of each level set in time. Thus, $ \omega_{n}(t_{n}) $ is nonnegative and we get
	\begin{equation}
		\int_{\left\lbrace \omega_{n}(t_{n})\geq1+\frac{1}{\sqrt{n}}\right\rbrace }\omega_{n}(t_{n})dx=\int_{\left\lbrace \omega_{n,0}\geq1+\frac{1}{\sqrt{n}}\right\rbrace }\omega_{n,0}dx.\label{eq12}
	\end{equation}
	For notational convenience, we denote $ \omega_{n}=\omega_{n}(t_{n}) $. By the estimate \eqref{L1L2Lw1} and \eqref{eq8} above, we have
	\begin{equation}
		\left\| \omega_{n,0}-1_{D}\right\|_{L^{1}}\leq\pi\left\| \omega_{n,0}-1_{D}\right\|_{{J}_{2}}\leq\frac{\pi}{n}.\label{L1bound}
	\end{equation}
	Then by \eqref{eq8}, \eqref{conserved}, and \eqref{L1bound}, we get
	\begin{equation}
		\begin{split}
			\left\| \omega_{n}\right\| _{L^{1}}&=\left\| \omega_{n,0}\right\| _{L^{1}}\longrightarrow\left\| 1_{D}\right\| _{L^{1}},\\
			\left\| \omega_{n}\right\| _{L^{2}}&=\left\| \omega_{n,0}\right\| _{L^{2}}\longrightarrow\left\| 1_{D}\right\| _{L^{2}},\\
			J(\omega_{n})&=J(\omega_{n,0})\longrightarrow J(1_{D})\quad\text{ as }\quad n\longrightarrow\infty.
		\end{split}\label{condwn}		
	\end{equation}
	We claim
	\begin{equation}
		\int_{\left\lbrace \omega_{n,0}\geq1+\frac{1}{\sqrt{n}}\right\rbrace }\omega_{n,0}dx\longrightarrow0\quad\text{ as }\quad n\longrightarrow\infty.\label{eq11}
	\end{equation}
	Indeed, we split the range of the above integral;
	\begin{equation}
		\nonumber \int_{\left\lbrace \omega_{n,0}\geq1+\frac{1}{\sqrt{n}}\right\rbrace }\omega_{n,0}dx=\underbrace{\int_{\left\lbrace \omega_{n,0}\geq1+\frac{1}{\sqrt{n}}\right\rbrace \cap D}\omega_{n,0}dx}_{(I)}+\underbrace{\int_{\left\lbrace \omega_{n,0}\geq1+\frac{1}{\sqrt{n}}\right\rbrace \setminus D}\omega_{n,0}dx}_{(II)}.
	\end{equation}
	For $ (II) $, by the $ L^{1} $-convergence \eqref{L1bound}, we have
	\begin{gather}
		\nonumber (II)\leq\int_{\mathbb{R}^{2}\setminus D}\omega_{n,0}dx\leq\left\| \omega_{n,0}-1_{D}\right\|_{L^{1}}\leq\frac{\pi}{n}\longrightarrow0\quad\text{ as }\quad n\longrightarrow\infty.
	\end{gather}
	To prove the similar holds for $ (I) $, we use the $ L^{1} $-convergence \eqref{L1bound} once more;
	\begin{align}
		\nonumber \frac{\pi}{n}&\geq\left\| \omega_{n,0}-1_{D}\right\|_{L^{1}}\geq\int_{D}|\omega_{n,0}-1|dx\\
		\nonumber&\geq\int_{\left\lbrace \omega_{n,0}\geq1+\frac{1}{\sqrt{n}}\right\rbrace \cap D}|\omega_{n,0}-1|dx\geq\frac{1}{\sqrt{n}}\left| \left\lbrace \omega_{n,0}\geq1+\frac{1}{\sqrt{n}}\right\rbrace \cap D\right|,
	\end{align}
	which gives us
	\begin{equation}
		\nonumber\left| \left\lbrace \omega_{n,0}\geq1+\frac{1}{\sqrt{n}}\right\rbrace \cap D\right|\leq\frac{\pi}{\sqrt{n}}.
	\end{equation}
	Then we have
	\begin{equation}
		\begin{split}
			\nonumber(I)&=\int_{\left\lbrace \omega_{n,0}\geq1+\frac{1}{\sqrt{n}}\right\rbrace \cap D}(\omega_{n,0}-1_{D})dx+\int_{\left\lbrace \omega_{n,0}\geq1+\frac{1}{\sqrt{n}}\right\rbrace \cap D}1_{D}dx\\
			&\leq\left\| \omega_{n,0}-1_{D}\right\| _{L^{1}}+\left| \left\lbrace \omega_{n,0}\geq1+\frac{1}{\sqrt{n}}\right\rbrace \cap D\right|\longrightarrow0\quad\text{ as }\quad n\longrightarrow\infty.
		\end{split}
	\end{equation}
	Hence, \eqref{eq11} holds. Then by the equation \eqref{eq12}, we have
	\begin{equation}
		\int_{\left\lbrace \omega_{n}\geq1+\frac{1}{\sqrt{n}}\right\rbrace }\omega_{n}dx\longrightarrow0\quad\text{ as }\quad n\longrightarrow\infty.\label{wndx}
	\end{equation}
	In sum, we obtained all the assumptions of Proposition \ref{prop2} from \eqref{condwn} and \eqref{wndx}. Therefore by the proposition, the sequence $ \left\lbrace \omega_{n}\right\rbrace _{n=1}^{\infty} $ satisfies
	\begin{equation}
		\nonumber \left\| \omega_{n}-1_{D}\right\| _{{J}_{2}}\longrightarrow0\quad\text{ as }\quad n\longrightarrow\infty,
	\end{equation}
	which is a contradiction to \eqref{eq9}.
	
\end{proof}

\section{Stability estimate}

\subsection{Rearrangement estimates}

Throughout this section, we use the following fine properties.

\begin{lem}[Nonexpansivity of rearrangement]\label{lemnonexp}
	For nonnegative functions $ g,h\in L^{1}(\mathbb{R}^{2}) $,
	we have
	\begin{equation}
		\left\| g^{\ast}-h^{\ast}\right\| _{L^{1}}\leq\left\| g-h\right\| _{L^{1}}.\label{nonexp}
	\end{equation}
\end{lem}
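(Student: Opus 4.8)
The plan is to reduce the $L^{1}$ estimate to a statement about super-level sets via the layer-cake representation, and then to the elementary geometric fact that symmetric rearrangement of sets does not increase symmetric differences, simply because concentric disks are nested.

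First I would record the pointwise layer-cake identity. For any two nonnegative measurable functions $u,v$ and each $x$, the function $s\mapsto 1_{\{u>s\}}(x)-1_{\{v>s\}}(x)$ equals $1_{[0,u(x))}(s)-1_{[0,v(x))}(s)$, which has constant sign on $(0,\infty)$: it is $+1_{[v(x),u(x))}(s)$ if $u(x)\ge v(x)$ and $-1_{[u(x),v(x))}(s)$ otherwise. Hence the absolute value may be pulled through the $s$-integral, and after integrating in $x$ (Tonelli applies since the integrand is nonnegative) and using $\int_{\mathbb{R}^{2}}|1_{A}-1_{B}|\,dx=|A\bigtriangleup B|$, one obtains
\[
  \|u-v\|_{L^{1}(\mathbb{R}^{2})}=\int_{0}^{\infty}\bigl|\{u>s\}\bigtriangleup\{v>s\}\bigr|\,ds
\]
for all nonnegative $u,v\in L^{1}(\mathbb{R}^{2})$. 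For $s>0$ every level set appearing here has finite measure by Chebyshev's inequality ($|\{u>s\}|\le s^{-1}\|u\|_{L^{1}}$), so all symmetric rearrangements below are well defined. Applying this identity once to $(u,v)=(g,h)$ and once to $(u,v)=(g^{\ast},h^{\ast})$ --- the latter lying in $L^{1}$ by \eqref{Lp_preserve} --- and using the level-set identity \eqref{levelset}, namely $\{g^{\ast}>s\}=\{g>s\}^{\ast}$ and $\{h^{\ast}>s\}=\{h>s\}^{\ast}$, the bound \eqref{nonexp} reduces to the set inequality $|A^{\ast}\bigtriangleup B^{\ast}|\le|A\bigtriangleup B|$ for measurable $A,B\subset\mathbb{R}^{2}$ of finite measure, applied to $A=\{g>s\}$, $B=\{h>s\}$ for a.e.\ $s$.

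For the set inequality I would use $|X\bigtriangleup Y|=|X|+|Y|-2|X\cap Y|$, valid for finite-measure sets. Since $A^{\ast}$ and $B^{\ast}$ are concentric disks (Definition \ref{defn1}(i)), one is contained in the other, so $|A^{\ast}\cap B^{\ast}|=\min(|A^{\ast}|,|B^{\ast}|)=\min(|A|,|B|)\ge|A\cap B|$. Combining this with the measure preservation $|A^{\ast}|=|A|$, $|B^{\ast}|=|B|$ yields
\[
  |A^{\ast}\bigtriangleup B^{\ast}|=|A|+|B|-2|A^{\ast}\cap B^{\ast}|\le|A|+|B|-2|A\cap B|=|A\bigtriangleup B|,
\]
and integrating over $s\in(0,\infty)$ gives \eqref{nonexp}. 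The argument is essentially routine; the only care-points are the sign-constancy observation that legitimizes the layer-cake identity (together with Tonelli) and the a.e.\ finiteness of the level sets that makes the set rearrangement meaningful. Alternatively, one may simply cite Lieb--Loss \cite[Sec.\ 3.5]{LiLAn}.
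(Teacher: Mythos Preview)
Your proof is correct. The paper does not actually prove this lemma: it simply remarks that the result follows from the convexity of $|\cdot|$ and cites \cite[Sec.~3.5]{LiLAn}. Your argument is the standard layer-cake proof, specialized from the general convex-$J$ statement in Lieb--Loss to the case $J(t)=|t|$; the sign-constancy observation that lets you pull the absolute value through the $s$-integral, together with the concentric-disk inequality $|A^{\ast}\cap B^{\ast}|=\min(|A|,|B|)\ge |A\cap B|$, are exactly the right ingredients. So your write-up supplies what the paper only cites, and by the same route.
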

The result follows from the convexity of $ |\cdot| $. For a proof, one may refer \cite[Sec. 3.5]{LiLAn} for details. The reason why we need $ \zeta $ to be radially symmetric and non-increasing in order to get stability is because it satisfies $ \zeta^{\ast}=\zeta $. Additionally, we shall use the rearrangement estimate \eqref{keyestimate} below, which is a sharper version of the basic property \eqref{Jbasic}. We prove the estimate by following the spirit of \cite[Lemma 1]{MaP85}. As our setting is the whole space $ \mathbb{R}^{2} $ while the lemma of \cite{MaP85} was stated for Euler flows in bounded channel $ \mathbb{T}\times[0,R] $, $ R>0$, we present the proof below for completeness.

\begin{lem}\label{lem2}
	Let a nonnegative function $ f\in L^{\infty}(\mathbb{R}^{2}) $ satisfy $ J(f)<\infty $ and $ f^{\ast} $ be its rearrangement. Then they satisfy
	\begin{equation}
		\left\| f-f^{\ast}\right\| _{L^{1}}^{2}\leq4\pi\left\| f\right\| _{L^{\infty}}\bigg[ J(f)-J(f^{\ast}) \bigg].\label{keyestimate}
	\end{equation}
\end{lem}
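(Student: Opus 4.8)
The plan is to use the layer-cake (bathtub) representation to reduce the statement to a family of comparisons between the characteristic function of a measurable set $A$ and the characteristic function of the disk $A^{\ast}$ of the same measure. Write $f(x)=\int_{0}^{\infty}1_{\{f>t\}}(x)\,dt$ and $f^{\ast}(x)=\int_{0}^{\infty}1_{\{f>t\}^{\ast}}(x)\,dt$; since the level sets $\{f>t\}$ are nested, for each $t$ the set $\{f>t\}\setminus\{f^\ast>t\}$ and $\{f^\ast>t\}\setminus\{f>t\}$ have equal measure, and one has the pointwise-in-$t$ identities $|f-f^{\ast}|=\int_0^\infty |1_{\{f>t\}}-1_{\{f^\ast>t\}}|\,dt$ and $J(f)-J(f^{\ast})=\int_0^\infty\big[J(1_{\{f>t\}})-J(1_{\{f>t\}^\ast})\big]\,dt$ (the latter because $J$ is linear and both integrals are finite under the hypotheses). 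So it suffices to prove, for a measurable set $A$ with $|A|<\infty$ and disk $A^{\ast}=B_{r_A}$,
\begin{equation}
\nonumber
|A\,\triangle\,A^{\ast}|^{2}\;\le\;4\pi\big[J(1_A)-J(1_{A^{\ast}})\big],
\end{equation}
and then combine the $t$-slices via Cauchy--Schwarz, paying the factor $\|f\|_{L^\infty}$ as the length of the $t$-interval that actually contributes.

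The geometric inequality for a single set is the heart of the matter. Split $A=(A\cap B_{r_A})\sqcup(A\setminus B_{r_A})$ and $A^{\ast}=B_{r_A}=(A\cap B_{r_A})\sqcup(B_{r_A}\setminus A)$. Then $A\triangle A^{\ast}=(A\setminus B_{r_A})\sqcup(B_{r_A}\setminus A)$, and the two pieces have equal measure, call it $m$, so $|A\triangle A^{\ast}|=2m$. On $A\setminus B_{r_A}$ the weight satisfies $|x|^{2}\ge r_A^{2}$, while on $B_{r_A}\setminus A$ it satisfies $|x|^{2}\le r_A^{2}$; hence
\begin{equation}
\nonumber
J(1_A)-J(1_{A^{\ast}})=\int_{A\setminus B_{r_A}}|x|^2\,dx-\int_{B_{r_A}\setminus A}|x|^2\,dx\ \ge\ r_A^{2}\,m-r_A^{2}\,m=0,
\end{equation}
which recovers \eqref{Jbasic} but is too lossy. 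To get the quadratic gain I would instead bound $\int_{A\setminus B_{r_A}}|x|^2dx\ge\int_{B_{\rho}\setminus B_{r_A}}|x|^2dx$ where $B_{\rho}\setminus B_{r_A}$ is the annulus of measure $m$ just outside $B_{r_A}$ (this uses that pushing mass inward decreases $J$, i.e. the same rearrangement principle), and similarly $\int_{B_{r_A}\setminus A}|x|^2dx\le\int_{B_{r_A}\setminus B_{\sigma}}|x|^2dx$ with $B_{r_A}\setminus B_{\sigma}$ the annulus of measure $m$ just inside. With $\pi(\rho^2-r_A^2)=\pi(r_A^2-\sigma^2)=m$, an explicit computation of $\int r^3\,dr$ over these two annuli gives $J(1_A)-J(1_{A^\ast})\ge \frac{1}{2\pi}\big[(\pi\rho^2)^2-(\pi r_A^2)^2\big]/(2\pi)\cdots$ — concretely it collapses to $\frac{m^{2}}{2\pi}$ after cancellation, i.e. $J(1_A)-J(1_{A^\ast})\ge \frac{m^2}{2\pi}=\frac{|A\triangle A^\ast|^2}{8\pi}$. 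That is the per-set inequality with the constant $4\pi$ once we track that $|A\triangle A^\ast|=2m$ carefully (the factor $2$ vs the stated $4\pi$ will come out right; I expect the clean statement $|A\triangle A^\ast|^2\le 4\pi[J(1_A)-J(1_{A^\ast})]$, the level-$1$ version of \eqref{keyestimate}).

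To pass from sets back to $f$: let $L=\|f\|_{L^\infty}$, so $\{f>t\}=\emptyset$ for $t\ge L$. Using the slice identities above and Cauchy--Schwarz in the $t$-variable on $[0,L]$,
\begin{equation}
\nonumber
\|f-f^\ast\|_{L^1}=\int_0^{L}|A_t\triangle A_t^\ast|\,dt\le L^{1/2}\Big(\int_0^{L}|A_t\triangle A_t^\ast|^2\,dt\Big)^{1/2}\le L^{1/2}\Big(4\pi\int_0^{L}\big[J(1_{A_t})-J(1_{A_t^\ast})\big]dt\Big)^{1/2},
\end{equation}
where $A_t=\{f>t\}$, and the last integral equals $J(f)-J(f^\ast)$. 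Squaring gives exactly \eqref{keyestimate}. The main obstacle is the single-set annular estimate: one must be careful that the ``worst case'' configuration is precisely mass concentrated in the thin annuli adjacent to $\partial B_{r_A}$, justify the two monotonicity bounds rigorously (a short rearrangement argument or a direct convexity computation with $\int r^3dr$), and then verify the arithmetic that the annular quantity simplifies to $m^2/(2\pi)$ with no lower-order error — this cancellation is what produces a clean inequality rather than one with extra terms. The measure-theoretic bookkeeping (finiteness of all integrals, using $J(f)<\infty$ and $f\in L^1$ from the Remark, and Fubini to interchange the $t$-integral with the $x$-integral) is routine but should be stated.
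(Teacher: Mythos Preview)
Your approach is correct and essentially the same as the paper's: both reduce to the single-set annular inequality $|A\triangle A^{\ast}|^{2}\le 4\pi[J(1_A)-J(1_{A^{\ast}})]$ (your annular computation actually gives $m^{2}/\pi$, not $m^{2}/(2\pi)$, and with $|A\triangle A^{\ast}|=2m$ this yields exactly the constant $4\pi$), and then combine levels via Cauchy--Schwarz, producing the factor $\|f\|_{L^{\infty}}$ as the length of the level interval. The only cosmetic difference is that the paper discretizes $f$ into simple functions at levels $k/n$ and passes to the limit, whereas you work directly with the continuous layer-cake; your route requires the pointwise identity $|f(x)-f^{\ast}(x)|=\int_{0}^{\infty}|1_{\{f>t\}}(x)-1_{\{f^{\ast}>t\}}(x)|\,dt$, which holds because for fixed $x$ the integrand has constant sign in $t$.
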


\begin{proof}
	Due to
	\begin{equation}
		\nonumber J(af)=aJ(f)\quad\text{ and }\quad(af)^{\ast}=af^{\ast},\quad a>0,
	\end{equation}
	it's enough to show only for the case $ \left\| f\right\| _{L^{\infty}}=1 $. Then for each $ n\in\mathbb{N} $, we use similar level sets and simple functions from \eqref{FkGk} and \eqref{fngn} in the proof of Lemma \ref{lem1-1};
	\begin{equation}
		\nonumber A_{k}^{(n)}=\left\lbrace x\in\mathbb{R}^{2} : f(x)>\frac{k}{n}\right\rbrace ,\quad C_{k}^{(n)}=\left\lbrace x\in\mathbb{R}^{2} : f^{\ast}(x)>\frac{k}{n}\right\rbrace,\quad k=1,\cdots,n-1,
	\end{equation}
	\begin{equation}
		\nonumber \xi^{(n)}=\sum_{k=1}^{n-1}\frac{1}{n}1_{A^{(n)}_{k}},\quad \eta^{(n)}=\sum_{k=1}^{n-1}\frac{1}{n}1_{C^{(n)}_{k}},
	\end{equation}
	in which $ \xi^{(n)} $ and $ \eta^{(n)} $ are dominated by and converge to $ f $ and $ f^{\ast} $ pointwise, respectively. We claim
	\begin{equation}
		\frac{4\pi (n-1)}{n}\Big[J(\xi^{(n)})-J(\eta^{(n)})\Big]\geq\left\| \xi^{(n)}-\eta^{(n)}\right\| _{L^{1}}^{2},\quad n\in\mathbb{N}.\label{eqn1}
	\end{equation}
	Once we have this, by the dominated convergence theorem, taking $ n\longrightarrow\infty $ on both sides of the above claim gives us \eqref{keyestimate}.
	
	Let's prove the above claim. We fix $ n\in\mathbb{N} $ and for the convenience of notation, we drop the parameter $ n $;
	\begin{equation}
		\nonumber A_{k}=A_{k}^{(n)},\quad C_{k}=C_{k}^{(n)},\quad\xi=\xi^{(n)},\quad \eta=\eta^{(n)}.
	\end{equation}
	Additionally, we take $ s_{k}, \beta_{k}\geq0 $ satisfying
	\begin{equation}
		\nonumber C_{k}=B_{s_{k}},\quad \beta_{k}=\frac{1}{2}\left\| 1_{A_{k}}-1_{C_{k}}\right\| _{L^{1}}=\frac{1}{2}|A_{k}\bigtriangleup C_{k}|,\quad k=1,\cdots n-1.
	\end{equation}		
	We recall $ |A_{k}|=|C_{k}| $ for each $ k $, due to the property \eqref{levelsetmeas}. This gives us
	\begin{equation}
		|A_{k}\setminus C_{k}|=|C_{k}\setminus A_{k}|=\frac{1}{2}|A_{k}\bigtriangleup C_{k}|=\beta_{k},\quad k=1,\cdots n-1.\label{betak}
	\end{equation}	
	Now we compute
	\begin{align}
		\nonumber J(\xi)-J(\eta)&=\frac{1}{n}\sum_{k=1}^{n-1}\left(\int_{A_{k}}|x|^{2}dx-\int_{C_{k}}|x|^{2}dx \right)=\frac{1}{n}\sum_{k=1}^{n-1}\left(\int_{A_{k}\setminus C_{k}}|x|^{2}dx-\int_{C_{k}\setminus A_{k}}|x|^{2}dx \right)\\
		\nonumber&\geq\frac{1}{n}\sum_{k=1}^{n-1}\left(\int_{\Sigma_{k,1}}|x|^{2}dx-\int_{\Sigma_{k,2}}|x|^{2}dx \right),
	\end{align}
	where for each $ k $, $ \Sigma_{k,1}\subset\mathbb{R}^{2}\setminus B_{s_{k}} $ is the annulus that has the same measure as $ A_{k}\setminus C_{k} $ with the inner radius $ s_{k} $, and $ \Sigma_{k,2}\subset B_{s_{k}} $ is the annulus having the same measure as $ C_{k}\setminus A_{k} $ with the outer radius $ s_{k} $;
	\begin{equation}
		\nonumber\Sigma_{k,1}=\left\lbrace x\in\mathbb{R}^{2} : s_{k}\leq|x|\leq r_{k,1}\right\rbrace  ,\quad \Sigma_{k,2}=\left\lbrace x\in\mathbb{R}^{2} : r_{k,2}\leq|x|\leq s_{k}\right\rbrace ,
	\end{equation}
	with $ r_{k,1}\geq s_{k}\geq r_{k,2}\geq0 $ satisfying
	\begin{equation}
		\nonumber|\Sigma_{k,1}|=|A_{k}\setminus C_{k}|,\quad |\Sigma_{k,2}|=|C_{k}\setminus A_{k}|.
	\end{equation}
	Thus, by \eqref{betak}, we have
	\begin{equation}
		\nonumber |\Sigma_{k,1}|=|\Sigma_{k,2}|=\beta_{k},
	\end{equation}
	in which a further calculation gives us
	\begin{equation}
		\nonumber r_{k,1}=\sqrt{s_{k}^{2}+\frac{\beta_{k}}{\pi}},\quad r_{k,2}=\sqrt{s_{k}^{2}-\frac{\beta_{k}}{\pi}}.
	\end{equation}
	Then we have
	\begin{align}
		\nonumber J(\xi)-J(\eta)&\geq\frac{1}{n}\sum_{k=1}^{n-1}\left(\int_{\Sigma_{k,1}}|x|^{2}dx-\int_{\Sigma_{k,2}}|x|^{2}dx \right)=\frac{2\pi}{n}\sum_{k=1}^{n-1}\left(\int_{s_{k}}^{r_{k,1}}\rho^{3}d\rho-\int_{r_{k,2}}^{s_{k}}\rho^{3}d\rho \right)\\
		\nonumber&=\frac{\pi}{2n}\sum_{k=1}^{n-1}(r_{k,1}^{4}+r_{k,2}^{4}-2s_{k}^{4})=\frac{\pi}{2n}\sum_{k=1}^{n-1}\left[ \left( s_{k}^{2}+\frac{\beta_{k}}{\pi}\right) ^{2}+\left( s_{k}^{2}-\frac{\beta_{k}}{\pi}\right) ^{2}-2s_{k}^{4}\right] \\
		&=\frac{1}{\pi n}\sum_{k=1}^{n-1}\beta_{k}^{2}=\frac{1}{4\pi n}\sum_{k=1}^{n-1}\left\| 1_{A_{k}}-1_{C_{k}}\right\| _{L^{1}}^{2}.\label{Jxi-Jeta}
	\end{align}
	On the other hand, using the Cauchy-Schwartz inequality, we have
	\begin{align}
		\nonumber \left\| \xi-\eta\right\| _{L^{1}}&=\int_{\mathbb{R}^{2}}\left| \frac{1}{n}\sum_{k=1}^{n-1}\left( 1_{A_{k}}-1_{C_{k}}\right) \right|dx \\
		\nonumber&\leq\frac{1}{n}\sum_{k=1}^{n-1}\left\| 1_{A_{k}}-1_{C_{k}}\right\| _{L^{1}}\leq\frac{\sqrt{n-1}}{n}\left( \sum_{k=1}^{n-1}\left\| 1_{A_{k}}-1_{C_{k}}\right\| _{L^{1}}^{2}\right) ^{\frac{1}{2}},
	\end{align}
	which gives
	\begin{equation}
		\sum_{k=1}^{n-1}\left\| 1_{A_{k}}-1_{C_{k}}\right\| _{L^{1}}^{2}\geq\frac{n^{2}}{n-1}\left\| \xi-\eta\right\| _{L^{1}}^{2}.\label{1Ak-1Ck}
	\end{equation}
	Collecting \eqref{Jxi-Jeta} and \eqref{1Ak-1Ck}, we have \eqref{eqn1}.
\end{proof}

\subsection{Stability in $ L^{1} $}

We first prove $ L^{1} $-stability. We note that the estimate below does not depend on $ \left\| \omega_{0}\right\| _{L^{\infty}} $.

\begin{lem}\label{lem3}
	Let $ R,M,\alpha\in(0,\infty) $. Then there exist constants $ C_{1}=C_{1}(R,M,\alpha)>0 $ and $ C_{2}=C_{2}(M)>0 $ such that if $ \zeta\in L^{\infty}(\mathbb{R}^{2}) $ is nonnegative, radially symmetric, non-increasing with $ J(\zeta)<\infty $, and satisfies
	\begin{equation}
		\left\| \zeta\right\| _{L^{\infty}}\leq M,\quad \left\| \zeta\right\| _{L^{1}}\leq\alpha,\label{Malpha}
	\end{equation}
	then for any nonnegative $ \omega_{0}\in L^{\infty}(\mathbb{R}^{2}) $ with $ J(\omega_{0})<\infty $, the corresponding solution $ \omega(t) $ of \eqref{eq1} satisfies
	\begin{equation}
		\begin{split}
			\sup_{t\geq0}\left\| \omega(t)-\zeta\right\| _{L^{1}}\leq &\;C_{1}\Bigl[ \left\| \omega_{0}-\zeta\right\| _{L^{1}}^{\frac{1}{2}}+\left\| \omega_{0}-\zeta\right\| _{L^{1}}+J(|\omega_{0}-\zeta|)^{\frac{1}{2}}\Bigr]+C_{2}\left( \int_{\mathbb{R}^{2}\setminus B_{R}}|x|^{2}\zeta dx\right) ^{\frac{1}{2}}.\label{L1}
		\end{split}		
	\end{equation}
\end{lem}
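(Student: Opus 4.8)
The plan is to prove the bound by comparing $\omega(t)$ with its symmetric‑decreasing rearrangement, exploiting two facts recalled in the paper: $(\omega(t))^{\ast}=(\omega_0)^{\ast}$ for all $t$ (conservation of every level‑set measure under the Euler flow) and $\zeta^{\ast}=\zeta$ (since $\zeta$ is radial and non‑increasing). Writing $\omega=\omega(t)$, the triangle inequality and the nonexpansivity estimate \eqref{nonexp} give
\[
\left\|\omega-\zeta\right\|_{L^{1}}\le\left\|\omega-\omega^{\ast}\right\|_{L^{1}}+\left\|(\omega_0)^{\ast}-\zeta^{\ast}\right\|_{L^{1}}\le\left\|\omega-\omega^{\ast}\right\|_{L^{1}}+\left\|\omega_0-\zeta\right\|_{L^{1}},
\]
so everything reduces to controlling the \emph{rearrangement defect} $\left\|\omega-\omega^{\ast}\right\|_{L^{1}}$ in a way that never invokes $\left\|\omega_0\right\|_{L^{\infty}}$.

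To kill the $L^\infty$‑dependence I would truncate at the level $M$: set $T_Mg:=\min(g,M)$, and use that $T_M$ commutes with the rearrangement, $(T_Mg)^{\ast}=T_M(g^{\ast})$, and that $T_M\omega(t)$ has time‑independent level‑set measures, hence $(T_M\omega(t))^{\ast}=T_M((\omega_0)^{\ast})$. Inserting $T_M\omega$,
\[
\left\|\omega-\omega^{\ast}\right\|_{L^{1}}\le\left\|(\omega-M)_{+}\right\|_{L^{1}}+\left\|T_M\omega-(T_M\omega)^{\ast}\right\|_{L^{1}}+\left\|(\omega^{\ast}-M)_{+}\right\|_{L^{1}}.
\]
Because $(\omega(t)-M)_{+}$ and $(\omega(t)^{\ast}-M)_{+}=((\omega_0-M)_{+})^{\ast}$ have conserved level‑set measures, and because $\zeta\le M$ forces $(\zeta-M)_{+}=0$, each outer term is $\le\left\|(\omega_0-M)_{+}\right\|_{L^{1}}=\left\|(\omega_0-M)_{+}-(\zeta-M)_{+}\right\|_{L^{1}}\le\left\|\omega_0-\zeta\right\|_{L^{1}}$ by $1$‑Lipschitzness of $t\mapsto(t-M)_{+}$. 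For the middle term, Lemma \ref{lem2} applied to $f=T_M\omega$ (so $\left\|f\right\|_{L^{\infty}}\le M$) gives $\left\|T_M\omega-(T_M\omega)^{\ast}\right\|_{L^{1}}^{2}\le 4\pi M\,[\,J(T_M\omega)-J((T_M\omega)^{\ast})\,]$. Writing $T_Mg=g-(g-M)_{+}$, using linearity of $J$, the identity $(\omega^{\ast}-M)_{+}=((\omega-M)_{+})^{\ast}$ and the basic inequality \eqref{Jbasic}, one checks $J(T_M\omega)-J((T_M\omega)^{\ast})\le J(\omega)-J(\omega^{\ast})$, and the right side is time‑independent: $J(\omega)-J(\omega^{\ast})=J(\omega_0)-J((\omega_0)^{\ast})$ by conservation of $J(\omega(t))$ and of $(\omega(t))^{\ast}$.

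It remains to bound the \emph{initial} defect $J(\omega_0)-J((\omega_0)^{\ast})\ge 0$ by the data, and this is the crux. Split $J(\omega_0)-J((\omega_0)^{\ast})=\big[J(\omega_0)-J(\zeta)\big]+\big[J(\zeta^{\ast})-J((\omega_0)^{\ast})\big]$. The first bracket is $\le J(|\omega_0-\zeta|)$. For the second, write $J(\zeta^{\ast})-J((\omega_0)^{\ast})=\int_{\mathbb{R}^{2}}|x|^{2}(\zeta^{\ast}-(\omega_0)^{\ast})\,dx$ and split into $B_R$ and its complement: on $B_R$ use $|x|^{2}\le R^{2}$ and \eqref{nonexp} to get $\le R^{2}\left\|\omega_0-\zeta\right\|_{L^{1}}$, while on $\mathbb{R}^{2}\setminus B_R$ discard the nonnegative term $-(\omega_0)^{\ast}$ and use $\zeta^{\ast}=\zeta$ to get $\le\int_{\mathbb{R}^{2}\setminus B_R}|x|^{2}\zeta\,dx$. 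Hence
\[
J(\omega_0)-J((\omega_0)^{\ast})\le J(|\omega_0-\zeta|)+R^{2}\left\|\omega_0-\zeta\right\|_{L^{1}}+\int_{\mathbb{R}^{2}\setminus B_R}|x|^{2}\zeta\,dx.
\]
Feeding this into the middle‑term estimate, collecting the three contributions to $\left\|\omega-\omega^{\ast}\right\|_{L^{1}}$, and using $\sqrt{a+b+c}\le\sqrt a+\sqrt b+\sqrt c$, yields \eqref{L1} with $C_{1}$ depending only on $R,M$ (for instance $C_{1}=3+R\sqrt{4\pi M}+\sqrt{4\pi M}$) and $C_{2}=\sqrt{4\pi M}$; the estimate is uniform in $t$ because every term on the right depends only on $\omega_0$ and $\zeta$.

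The main obstacle is precisely the last paragraph: $J(\omega_0)-J((\omega_0)^{\ast})$ cannot be controlled by $\left\|\omega_0-\zeta\right\|$‑type quantities alone, since the far field — where $\zeta$ and $\omega_0$ are small but spread out — genuinely obstructs it, which is why the splitting at radius $R$ and the tail term $\int_{\mathbb{R}^{2}\setminus B_R}|x|^{2}\zeta\,dx$ are forced. The only other delicate point is the bookkeeping showing that truncation at level $M$ is simultaneously compatible with $\ast$ and with the conserved quantities of the flow — this is exactly what permits replacing $\left\|\omega_0\right\|_{L^{\infty}}$ by $M$ when invoking Lemma \ref{lem2}.
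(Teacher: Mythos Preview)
Your proof is correct and follows the same overall architecture as the paper's: truncate $\omega(t)$ near the level $M$ to remove dependence on $\|\omega_0\|_{L^\infty}$, apply Lemma~\ref{lem2} to the truncated function, invoke nonexpansivity~\eqref{nonexp} together with $\zeta^\ast=\zeta$, and split at radius $R$ to isolate the tail $\int_{\mathbb R^2\setminus B_R}|x|^2\zeta\,dx$.

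There are, however, two genuine simplifications in your execution worth noting. First, the paper truncates at level $M+1$ via $\widetilde\omega=\omega\cdot 1_{A_\omega}+(M+1)1_{A_\omega^c}$ and then decomposes $J(\widetilde\omega)-J((\widetilde\omega)^\ast)$ into four pieces $(I_a),\dots,(I_d)$; the last piece $(I_d)=J(\omega^\ast)-J((\widetilde\omega)^\ast)$ requires a separate argument using that $\{(\omega_0)^\ast>M+1\}$ is a small disk, and this is where the parameter $\alpha=\|\zeta\|_{L^1}$ enters. Your observation that $J(T_M\omega)-J((T_M\omega)^\ast)\le J(\omega)-J(\omega^\ast)$, obtained by writing $T_Mg=g-(g-M)_+$ and applying \eqref{Jbasic} to $(\omega-M)_+$, collapses $(I_a)$ and $(I_d)$ in one stroke and eliminates $\alpha$ from the constants altogether; your $C_1$ depends only on $R,M$. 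Second, the paper handles the analogue of your outer terms through $(III)+(IV)$ and Hölder against $\|\zeta\|_{L^2}\le\sqrt{M\alpha}$, producing an $\alpha$-dependent $\|\omega_0-\zeta\|_{L^1}^{1/2}$ contribution; your use of the $1$-Lipschitzness of $t\mapsto(t-M)_+$ with $(\zeta-M)_+=0$ gives the cleaner bound $\|(\omega_0-M)_+\|_{L^1}\le\|\omega_0-\zeta\|_{L^1}$. The paper's route is slightly longer but of course still proves the stated lemma; yours proves a marginally sharper version.
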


\begin{proof}
	During the proof, for any nonnegative function $ f\in L^{\infty}(\mathbb{R}^{2})  $, we define a set $ A_{f}\subset\mathbb{R}^{2} $ by
	\begin{equation}
		A_{f}:=\left\lbrace x\in\mathbb{R}^{2} : f(x)\leq M+1\right\rbrace,\label{Af}
	\end{equation}
	and a function $ \widetilde{f}\in L^{\infty}(\mathbb{R}^{2})  $ by
	\begin{equation}
		\widetilde{f}(x)=\begin{cases}
			f(x) & \text{ if }x\in A_{f}\\
			M+1 & \text{ if }x\in \mathbb{R}^{2}\setminus A_{f}
		\end{cases}.\label{ftilde}
	\end{equation}
	Then we have
	\begin{equation}
		\begin{split}
			\left| \mathbb{R}^{2}\setminus A_{\omega(t)}\right| &=\left| \mathbb{R}^{2}\setminus A_{\omega_{0}}\right| =\left| \left\lbrace x\in\mathbb{R}^{2} : \omega_{0}(x)> M+1\right\rbrace \right| =\int_{\left\lbrace \omega_{0}-M> 1\right\rbrace}1 dx\\
			&\leq\int_{\left\lbrace \omega_{0}-M> 1\right\rbrace}\left| \omega_{0}-M\right| dx\leq\int_{\left\lbrace \omega_{0}-M> 1\right\rbrace}\left| \omega_{0}-\zeta\right| dx\leq\left\| \omega_{0}-\zeta\right\| _{L^{1}},\quad t\geq0,\label{A^C}
		\end{split}		
	\end{equation}
	where the first equality holds
due to the conservation of level set measure \eqref{intro_level}.
Let's fix $ t\geq0 $ and for simplicity, we drop the parameter $ t $;
	\begin{equation}
		\nonumber \omega=\omega(t),\quad A_{\omega}=A_{\omega(t)},\quad \widetilde{\omega}=\widetilde{\omega(t)}.
	\end{equation}
	Then we have
	\begin{equation}
		\begin{split}
			\nonumber \left\| \omega-\zeta\right\| _{L^{1}}&=\int_{A_{\omega}}\left| \widetilde{\omega}-\zeta\right| dx+\int_{\mathbb{R}^{2}\setminus A_{\omega}}\left| \omega-\zeta\right| dx\\
			&\leq\left\| \widetilde{\omega}-\zeta\right\|_{L^{1}}+\int_{\mathbb{R}^{2}\setminus A_{\omega}}\omega dx+\int_{\mathbb{R}^{2}\setminus A_{\omega}}\zeta dx\\
			&\leq\underbrace{\left\| \widetilde{\omega}-(\widetilde{\omega})^{\ast}\right\|_{L^{1}} }_{(I)}+\underbrace{\left\| (\widetilde{\omega})^{\ast}-\zeta\right\|_{L^{1}} }_{(II)}+\underbrace{\int_{\mathbb{R}^{2}\setminus A_{\omega_{0}}}\omega_{0} dx}_{(III)}+\underbrace{\int_{\mathbb{R}^{2}\setminus A_{\omega}}\zeta dx}_{(IV)}.
		\end{split}		
	\end{equation}
	To estimate $ (III) $ and $ (IV) $, using the estimate \eqref{A^C} and the H\"{o}lder's inequality, we have
	\begin{align}
		\nonumber (III)&\leq\int_{\mathbb{R}^{2}\setminus A_{\omega_{0}}}\left| \omega_{0}-\zeta\right|  dx+\int_{\mathbb{R}^{2}\setminus A_{\omega_{0}}}\zeta dx\\
		\nonumber&\leq\left\| \omega_{0}-\zeta\right\| _{L^{1}}+\left\| \zeta\right\| _{L^{2}}\left| \mathbb{R}^{2}\setminus A_{\omega_{0}}\right| ^{\frac{1}{2}}\leq\left\| \omega_{0}-\zeta\right\| _{L^{1}}+\left\| \zeta\right\| _{L^{2}}\left\| \omega_{0}-\zeta\right\| _{L^{1}}^{\frac{1}{2}},		
	\end{align}
	\begin{align}
		\nonumber (IV)&\leq\left\| \zeta\right\| _{L^{2}}\left| \mathbb{R}^{2}\setminus A_{\omega}\right| ^{\frac{1}{2}}=\left\| \zeta\right\| _{L^{2}}\left| \mathbb{R}^{2}\setminus A_{\omega_{0}}\right| ^{\frac{1}{2}}\leq\left\| \zeta\right\| _{L^{2}}\left\| \omega_{0}-\zeta\right\| _{L^{1}}^{\frac{1}{2}}.
	\end{align}
	Thus, we get
	\begin{align}
		(III)+(IV)&\leq\left\| \omega_{0}-\zeta\right\| _{L^{1}}+2\left\| \zeta\right\| _{L^{2}}\left\| \omega_{0}-\zeta\right\| _{L^{1}}^{\frac{1}{2}}\leq\left\| \omega_{0}-\zeta\right\| _{L^{1}}+2\sqrt{M\alpha}\left\| \omega_{0}-\zeta\right\| _{L^{1}}^{\frac{1}{2}}.\label{(III)+(IV)}
	\end{align}
	To estimate $ (II) $, we observe the fact
	\begin{equation}
		\omega^{\ast}=(\omega_{0})^{\ast},\label{star}		
	\end{equation}
	because both $ \omega $ and $ \omega_{0} $ have the same measure for each level set. In other words, functions with same level set measure have the same rearrangement. Similarly, we have
	\begin{equation}
		(\widetilde{\omega})^{\ast}=(\widetilde{\omega_{0}})^{\ast},\label{tildestar}
	\end{equation}
	because the level set of both $ \widetilde{\omega} $ and $ \widetilde{\omega_{0}} $ is the empty set for $ a\geq M+1 $ while for the case $ a<M+1 $, we get
	\begin{equation}
		\begin{split}
			\nonumber\left| \left\lbrace x\in\mathbb{R}^{2} : \widetilde{\omega}(x)>a\right\rbrace \right|&=\left| \left\lbrace x\in\mathbb{R}^{2} : \omega(x)>a\right\rbrace \right| \\
			&=\left| \left\lbrace x\in\mathbb{R}^{2} : \omega_{0}(x)>a\right\rbrace \right| =\left| \left\lbrace x\in\mathbb{R}^{2} : \widetilde{\omega_{0}}(x)>a\right\rbrace \right| .
		\end{split}
	\end{equation}
	Then the nonexpansivity estimate \eqref{nonexp} in Lemma \ref{lemnonexp} and \eqref{tildestar} gives
	\begin{align}
		\begin{split}
			(II)&=\Big\| (\widetilde{\omega_{0}})^{\ast}-\underbrace{\zeta}_{=\zeta^{\ast}}\Big\|_{L^{1}}\leq\left\| \widetilde{\omega_{0}}-\zeta\right\|_{L^{1}}=\int_{A_{\omega_{0}}}\left| \omega_{0}-\zeta\right| dx+\int_{\mathbb{R}^{2}\setminus A_{\omega_{0}}}\left| (M+1)-\zeta\right| dx\\
			&\leq\int_{A_{\omega_{0}}}\left| \omega_{0}-\zeta\right| dx+\int_{\mathbb{R}^{2}\setminus A_{\omega_{0}}}\left| \omega_{0}-\zeta\right| dx=\left\| \omega_{0}-\zeta\right\|_{L^{1}}.\label{(II)}
		\end{split}		
	\end{align}	
	This is the first time we use the fact that $ \zeta $ is radially symmetric and non-increasing so that we have $ \zeta^{\ast}=\zeta $.
	
	To estimate $ (I) $, we recall $ \left\| \widetilde{\omega}\right\| _{L^{\infty}}\leq M+1 $. Thus, the rearrangement estimate \eqref{keyestimate} in Lemma \ref{lem2} says
	\begin{align}
		\nonumber (I)&\leq\sqrt{4\pi(M+1)}\bigg[J\left( \widetilde{\omega}\right) -J\left( (\widetilde{\omega})^{\ast}\right)\bigg]^{\frac{1}{2}}.
	\end{align}
	Then we split $ \Big(J\left( \widetilde{\omega}\right) -J\left( (\widetilde{\omega})^{\ast}\right)\Big) $ into 4 terms by adding and subtracting suitable terms:
	\begin{equation}
		\nonumber J\left( \widetilde{\omega}\right) -J\left( (\widetilde{\omega})^{\ast}\right)=\underbrace{\left[ J\left( \widetilde{\omega}\right) -J\left( \omega\right)\right]}_{(I_{a})}+\underbrace{\left[ J\left( \omega\right) -J\left( \zeta\right)\right] }_{(I_{b})}+\underbrace{\left[ J\left( \zeta\right) -J\left( \omega^{\ast}\right)\right] }_{(I_{c})} +\underbrace{\left[ J\left( \omega^{\ast}\right) -J\left( (\widetilde{\omega})^{\ast}\right)\right]}_{(I_{d})} .
	\end{equation}
	The term $ (I_{a}) $ is nonpositive, due to $ \widetilde{\omega}\leq \omega $, so it can be dropped. The estimate of $ (I_{b}) $ follows from the conservation of the angular impulse of $ \omega $:
	\begin{align}
		\nonumber (I_{b})&=J(\omega_{0})-J(\zeta)\leq J(|\omega_{0}-\zeta|).
	\end{align}
	The estimate of $ (I_{c}) $ becomes, using the equation \eqref{star},
	\begin{align}
		\nonumber (I_{c})&=J(\zeta)-J((\omega_{0})^{\ast})\leq\int_{B_{R}}|x|^{2}\left| \zeta-(\omega_{0})^{\ast}\right| dx+\int_{\mathbb{R}^{2}\setminus B_{R}}|x|^{2}\left| \zeta-(\omega_{0})^{\ast}\right| dx\\
		\nonumber&\leq R^{2}\int_{B_{R}}\left| \zeta-(\omega_{0})^{\ast}\right| dx+\int_{\mathbb{R}^{2}\setminus B_{R}} |x|^{2}\zeta dx+\int_{\mathbb{R}^{2}\setminus B_{R}}|x|^{2}(\omega_{0})^{\ast}dx.
	\end{align}
	For the last term, we shall use the following decomposition;
	\begin{equation*}
		\int_{\mathbb{R}^{2}\setminus B_{R}}|x|^{2}(\omega_{0})^{\ast}dx=\Big( J((\omega_{0})^{\ast})-J(\zeta) \Big)-\int_{B_{R}}|x|^{2} \Big((\omega_{0})^{\ast}-\zeta \Big)dx+\int_{\mathbb{R}^{2}\setminus B_{R}}|x|^{2}\zeta dx,\label{tail}
	\end{equation*}
	which gives us
	\begin{align}
		\nonumber (I_{c})&\leq R^{2}\int_{B_{R}}\left| \zeta-(\omega_{0})^{\ast}\right| dx+ \Big(J((\omega_{0})^{\ast})-J(\zeta)\Big)-\int_{B_{R}}|x|^{2}  \Big((\omega_{0})^{\ast}-\zeta \Big)dx+2\int_{\mathbb{R}^{2}\setminus B_{R}}|x|^{2}\zeta dx\\
		\nonumber&\leq R^{2}\int_{B_{R}}\left| \zeta-(\omega_{0})^{\ast}\right| dx+\Big( J(\omega_{0})-J(\zeta)\Big)+\int_{B_{R}}|x|^{2}  \left| (\omega_{0})^{\ast}-\zeta \right| dx+2\int_{\mathbb{R}^{2}\setminus B_{R}}|x|^{2}\zeta dx\\
		\nonumber&\leq 2R^{2}\int_{\mathbb{R}^{2}}\left| \zeta-(\omega_{0})^{\ast}\right| dx+J(|\omega_{0}-\zeta|)+2\int_{\mathbb{R}^{2}\setminus B_{R}}|x|^{2}\zeta dx.
	\end{align}
	Then by the nonexpansivity estimate \eqref{nonexp} in Lemma \ref{lemnonexp}, we have	
	\begin{align}
		\nonumber (I_{c})&\leq 2R^{2}\Big\| (\omega_{0})^{\ast}-\underbrace{\zeta}_{=\zeta^{\ast}}\Big\| _{L^{1}}+J(|\omega_{0}-\zeta|)+2\int_{\mathbb{R}^{2}\setminus B_{R}}|x|^{2}\zeta dx\\
		\nonumber&\leq 2R^{2}\left\| \omega_{0}-\zeta\right\| _{L^{1}}+J(|\omega_{0}-\zeta|)+2\int_{\mathbb{R}^{2}\setminus B_{R}}|x|^{2}\zeta dx.
	\end{align}	
	To estimate $ (I_{d}) $, we observe the fact
	\begin{equation}
		(\widetilde{\omega_{0}})^{\ast}=\widetilde{(\omega_{0})^{\ast}},\label{startilde}
	\end{equation}
	because both functions $ (\widetilde{\omega_{0}})^{\ast} $ and $ \widetilde{(\omega_{0})^{\ast}} $ are radially symmetric and non-increasing with the same measure for each level set. Indeed, if we have $ a\geq M+1 $, then each level set of both functions is the empty set and when $ a<M+1 $, we have, from the conservation of level set measure \eqref{levelsetmeas},
	\begin{equation}
		\begin{split}
			\nonumber \left| \left\lbrace x\in\mathbb{R}^{2} : (\widetilde{\omega_{0}})^{\ast}(x)>a\right\rbrace \right| &=\left| \left\lbrace x\in\mathbb{R}^{2} : \widetilde{\omega_{0}}(x)>a\right\rbrace \right| =\left| \left\lbrace x\in\mathbb{R}^{2} : \omega_{0}(x)>a\right\rbrace \right|\\
			&=\left| \left\lbrace x\in\mathbb{R}^{2} : (\omega_{0})^{\ast}(x)>a\right\rbrace \right| =\left| \left\lbrace x\in\mathbb{R}^{2} : \widetilde{(\omega_{0})^{\ast}}(x)>a\right\rbrace \right| .
		\end{split}
	\end{equation}
	Now using equations \eqref{star}, \eqref{tildestar}, and \eqref{startilde}, we have
	\begin{align}
		\nonumber (I_{d})&=J\left( (\omega_{0})^{\ast}\right) -J\left( (\widetilde{\omega_{0}})^{\ast}\right)=J\left( (\omega_{0})^{\ast}\right) -J\left( \widetilde{(\omega_{0})^{\ast}}\right).
	\end{align}
	Then splitting the integral range into $ A_{(\omega_{0})^{\ast}} $ and $ \mathbb{R}^{2}\setminus A_{(\omega_{0})^{\ast}} $ for each of the last terms above, the integral terms on $ A_{(\omega_{0})^{\ast}} $ get cancelled out;
	\begin{align}
		\begin{split}
			\nonumber (I_{d})=&\int_{A_{(\omega_{0})^{\ast}}}|x|^{2}\underbrace{(\omega_{0})^{\ast}}_{=\widetilde{(\omega_{0})^{\ast}}}dx+\int_{\mathbb{R}^{2}\setminus A_{(\omega_{0})^{\ast}}}|x|^{2}(\omega_{0})^{\ast}dx\\
			&-\left(  \int_{A_{(\omega_{0})^{\ast}}}|x|^{2}\widetilde{(\omega_{0})^{\ast}}dx+\int_{\mathbb{R}^{2}\setminus A_{(\omega_{0})^{\ast}}}|x|^{2}\widetilde{(\omega_{0})^{\ast}}dx\right) \leq\int_{\mathbb{R}^{2}\setminus A_{(\omega_{0})^{\ast}}}|x|^{2}(\omega_{0})^{\ast}dx.
		\end{split}
	\end{align}
	Then using $ \mathbb{R}^{2}\setminus A_{(\omega_{0})^{\ast}}=B_{\overline{r}} $ for some $ \overline{r}\geq0 $, the conservation of level set measure \eqref{levelsetmeas}, and the estimate \eqref{A^C}, we have
	\begin{equation}
		\begin{split}
			\nonumber \pi \overline{r}^{2}&=|\mathbb{R}^{2}\setminus A_{(\omega_{0})^{\ast}}|=\left| \left\lbrace y\in\mathbb{R}^{2} : (\omega_{0})^{\ast}>M+1\right\rbrace \right|=\left| \left\lbrace y\in\mathbb{R}^{2} : \omega_{0}>M+1\right\rbrace \right|\\
			&=|\mathbb{R}^{2}\setminus A_{\omega_{0}}|\leq\left\| \omega_{0}-\zeta\right\| _{L^{1}}.
		\end{split}
	\end{equation}
	Using this and the conservation of $ L^{1} $-norm \eqref{Lp_preserve}, we have
	\begin{equation}
		\begin{split}
			\nonumber (I_{d})&\leq \overline{r}^{2}\int_{B_{\overline{r}}}(\omega_{0})^{\ast}dx\leq \frac{1}{\pi}\left\| \omega_{0}-\zeta\right\| _{L^{1}}\left\| (\omega_{0})^{\ast}\right\| _{L^{1}}=\frac{1}{\pi}\left\| \omega_{0}-\zeta\right\| _{L^{1}}\left\| \omega_{0}\right\| _{L^{1}}\\
			&\leq \frac{1}{\pi} \left\| \omega_{0}-\zeta\right\| _{L^{1}}^{2}+\frac{1}{\pi}\left\| \zeta\right\| _{L^{1}}\left\| \omega_{0}-\zeta\right\| _{L^{1}}\leq \frac{1}{\pi} \left\| \omega_{0}-\zeta\right\| _{L^{1}}^{2}+\frac{\alpha}{\pi}\left\| \omega_{0}-\zeta\right\| _{L^{1}}.
		\end{split}		
	\end{equation}
	Now gathering all the estimates of $ (I_{a}) $, $ (I_{b}) $, $ (I_{c}) $, and $ (I_{d}) $, we have the following estimate for $ (I) $;
	\begin{equation}
		\begin{split}
			(I)&\leq\sqrt{4\pi(M+1)}\bigg[(I_{a})+(I_{b})+(I_{c})+(I_{d})\bigg]^{\frac{1}{2}}\\
			&\leq C_{R,M,\alpha}\Bigl[ \left\| \omega_{0}-\zeta\right\| _{L^{1}}^{\frac{1}{2}}+\left\| \omega_{0}-\zeta\right\| _{L^{1}}+J(|\omega_{0}-\zeta|)^{\frac{1}{2}}\Bigr]+C_{M}\left( \int_{\mathbb{R}^{2}\setminus B_{R}}|x|^{2}\zeta dx\right) ^{\frac{1}{2}} ,\label{(I)}
		\end{split}
	\end{equation}
	where $ C_{R,M,\alpha}>0 $ is a constant depending only on $ R,M,\alpha $ and $ C_{M}>0 $ is a constant depending only on $ M $.
	Finally, we have the estimate \eqref{L1} from estimates \eqref{(III)+(IV)} of $ (III)+(IV) $, \eqref{(II)} of $ (II) $, and \eqref{(I)} of $ (I) $.
\end{proof}

\subsection{Stability in $J_p$}

Before proving our main theorems, we first obtain $ {J}_{p} $-stability.

\begin{lem}\label{lemmain}
	Let $ R,M,\alpha\in(0,\infty) $. Then there exist constants $ C_{3}=C_{3}(R,M,\alpha)>0 $ and $ C_{4}=C_{4}(M)>0 $ such that if a function $ \zeta\in L^{\infty}(\mathbb{R}^{2}) $ with $ J(\zeta)<\infty $ is nonnegative, radially symmetric, non-increasing, and satisfies
	\begin{equation}
		\nonumber \left\| \zeta\right\| _{L^{\infty}}\leq M,\quad \left\| \zeta\right\| _{L^{1}}\leq\alpha,
	\end{equation}
	then for any nonnegative $ \omega_{0}\in L^{\infty}(\mathbb{R}^{2}) $ with $ J(\omega_{0})<\infty $, the solution $ \omega(t) $ of \eqref{eq1} satisfies
	\begin{equation}
		\begin{split}
			\sup_{t\geq0}\left\| \omega(t)-\zeta\right\| _{{J}_{p}}\leq &\;C_{3}\Bigr[ \left\| \omega_{0}-\zeta\right\| _{{J}_{p}}^{\frac{1}{2p}}+\left\| \omega_{0}-\zeta\right\| _{{J}_{p}}\Bigr] +R^{2}\cdot C_{4}\left( \int_{\mathbb{R}^{2}\setminus B_{R}}|x|^{2}\zeta dx\right) ^{\frac{1}{2}}\\
			&+C_{4}\Biggl[\left( \int_{\mathbb{R}^{2}\setminus B_{R}}|x|^{2}\zeta dx\right)^{\frac{1}{2p}}+ \int_{\mathbb{R}^{2}\setminus B_{R}}|x|^{2}\zeta dx\Biggr]\quad\text{ for any }\quad p\in[1,\infty)\label{mainestimate}
		\end{split}
	\end{equation}
\end{lem}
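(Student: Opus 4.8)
The plan is to bootstrap the $L^{1}$-estimate \eqref{L1} of Lemma \ref{lem3} up to a $J_{p}$-estimate. Since $\|g\|_{J_{p}}=\|g\|_{L^{p}}+J(|g|)$, it suffices to control, uniformly in $t\geq 0$, the two quantities $\|\omega(t)-\zeta\|_{L^{p}}$ and $J(|\omega(t)-\zeta|)$. I reuse the cut-off $\widetilde{f}$ at height $M+1$ and the set $A_{f}=\{f\leq M+1\}$ from \eqref{Af}--\eqref{ftilde}; fix $t\geq 0$ and write $\omega=\omega(t)$, $\widetilde{\omega}=\widetilde{\omega(t)}$. The one genuinely new point beyond Lemma \ref{lem3} is that, because $\omega(t)$ and $\omega_{0}$ share all level-set measures \eqref{intro_level} and $s\mapsto (s-(M+1))_{+}$ is a nondecreasing function vanishing on $[0,M+1]$, the ``overshoot parts'' $\omega(t)-\widetilde{\omega(t)}=(\omega(t)-(M+1))_{+}$ and $\omega_{0}-\widetilde{\omega_{0}}=(\omega_{0}-(M+1))_{+}$ also share all level-set measures; hence $\|\omega(t)-\widetilde{\omega(t)}\|_{L^{q}}=\|\omega_{0}-\widetilde{\omega_{0}}\|_{L^{q}}$ for every $q\in[1,\infty)$. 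This transfers the $L^{p}$-size of the overshoot at time $t$ back to $t=0$, where it is pointwise dominated by $|\omega_{0}-\zeta|$ (on $\{\omega_{0}>M+1\}$ one has $\zeta\leq M<M+1<\omega_{0}$, so $0\leq\omega_{0}-(M+1)\leq\omega_{0}-\zeta$), thereby removing any dependence on $\|\omega_{0}\|_{L^{\infty}}$.

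For the $L^{p}$ piece I would write $\omega-\zeta=(\widetilde{\omega}-\zeta)+(\omega-\widetilde{\omega})$. Since $0\leq\widetilde{\omega}\leq M+1$ and $0\leq\zeta\leq M$, we have $|\widetilde{\omega}-\zeta|\leq M+1$, whence
\[
\|\widetilde{\omega}-\zeta\|_{L^{p}}^{p}\leq(M+1)^{p-1}\|\widetilde{\omega}-\zeta\|_{L^{1}}\leq(M+1)^{p-1}\Bigl(\|\omega-\zeta\|_{L^{1}}+\|\omega_{0}-\widetilde{\omega_{0}}\|_{L^{1}}\Bigr),
\]
using $\|\widetilde{\omega}-\omega\|_{L^{1}}=\|\omega_{0}-\widetilde{\omega_{0}}\|_{L^{1}}\leq\|\omega_{0}-\zeta\|_{L^{1}}$; and, by the transfer above, $\|\omega-\widetilde{\omega}\|_{L^{p}}=\|\omega_{0}-\widetilde{\omega_{0}}\|_{L^{p}}\leq\|\omega_{0}-\zeta\|_{L^{p}}$. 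Taking $p$-th roots — all prefactors $(M+1)^{(p-1)/p}$ and $2^{(p-1)/p}$ are bounded independently of $p$ by $M+1$ and $2$ — this gives $\|\omega-\zeta\|_{L^{p}}\lesssim_{M}\|\omega-\zeta\|_{L^{1}}^{1/p}+\|\omega_{0}-\zeta\|_{L^{1}}^{1/p}+\|\omega_{0}-\zeta\|_{L^{p}}$, with $\|\omega-\zeta\|_{L^{1}}$ controlled by the right-hand side of \eqref{L1}.

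For the weighted piece I would split at $B_{R}$: $J(|\omega-\zeta|)\leq R^{2}\|\omega-\zeta\|_{L^{1}}+\int_{\mathbb{R}^{2}\setminus B_{R}}|x|^{2}\omega\,dx+\int_{\mathbb{R}^{2}\setminus B_{R}}|x|^{2}\zeta\,dx$, and for the middle term use the same algebraic identity as in the estimate of the term $(I_{c})$ in the proof of Lemma \ref{lem3}, namely $\int_{\mathbb{R}^{2}\setminus B_{R}}|x|^{2}\omega\,dx=(J(\omega)-J(\zeta))-\int_{B_{R}}|x|^{2}(\omega-\zeta)\,dx+\int_{\mathbb{R}^{2}\setminus B_{R}}|x|^{2}\zeta\,dx$, together with the conservation $J(\omega)=J(\omega_{0})$, the bound $|J(\omega_{0})-J(\zeta)|\leq J(|\omega_{0}-\zeta|)$, and $|\int_{B_{R}}|x|^{2}(\omega-\zeta)\,dx|\leq R^{2}\|\omega-\zeta\|_{L^{1}}$, to obtain $J(|\omega-\zeta|)\leq 2R^{2}\|\omega-\zeta\|_{L^{1}}+J(|\omega_{0}-\zeta|)+2\int_{\mathbb{R}^{2}\setminus B_{R}}|x|^{2}\zeta\,dx$; again insert \eqref{L1}.

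Finally, I would assemble the two estimates, substitute the right-hand side of \eqref{L1} (a fixed-length sum of terms of the form: constant times one of $\|\omega_{0}-\zeta\|_{L^{1}}^{1/2}$, $\|\omega_{0}-\zeta\|_{L^{1}}$, $J(|\omega_{0}-\zeta|)^{1/2}$, $(\int_{\mathbb{R}^{2}\setminus B_{R}}|x|^{2}\zeta\,dx)^{1/2}$), distribute the outer exponent $1/p$ coming from the $L^{p}$-bound via subadditivity $(\sum a_{i})^{1/p}\leq\sum a_{i}^{1/p}$, and then reorganize using the elementary comparisons $\|g\|_{L^{1}}\leq\pi\|g\|_{J_{p}}$, $J(|g|)\leq\|g\|_{J_{p}}$, $\|g\|_{L^{p}}\leq\|g\|_{J_{p}}$ (all with $p$-independent constants), together with $x^{\theta}\leq x^{1/(2p)}+x$ valid for all $x\geq 0$ and $\theta\in[\tfrac{1}{2p},1]$. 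This collapses the perturbation terms into $C_{3}(R,M,\alpha)\bigl(\|\omega_{0}-\zeta\|_{J_{p}}^{1/(2p)}+\|\omega_{0}-\zeta\|_{J_{p}}\bigr)$ and the $\zeta$-tail terms into the two displayed tail expressions of \eqref{mainestimate} with a constant $C_{4}(M)$, as claimed. The main obstacle is the $L^{p}$ step: the overshoot $\omega(t)-\widetilde{\omega(t)}$ need not be small in $L^{\infty}$, so one cannot estimate its $L^{p}$-norm at the current time — the level-set transfer to $t=0$ is precisely what removes the dependence on $\|\omega_{0}\|_{L^{\infty}}$. The remaining work is careful bookkeeping of fractional powers so that they all fit under the single exponent $1/(2p)$ (and its tail analogue), and so that no constant secretly depends on $p$.
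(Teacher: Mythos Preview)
Your proposal is correct and follows essentially the same route as the paper: the weighted piece is handled identically (split at $B_{R}$, use $J(\omega)=J(\omega_{0})$, feed in \eqref{L1}), and the $L^{p}$ piece relies on the same core idea of transferring the overshoot $(\omega(t)-(M+1))_{+}$ back to $t=0$ via preservation of level-set measures. The only cosmetic difference is that you decompose the function as $(\widetilde{\omega}-\zeta)+(\omega-\widetilde{\omega})$ and invoke $\|\omega-\widetilde{\omega}\|_{L^{p}}=\|\omega_{0}-\widetilde{\omega_{0}}\|_{L^{p}}$ directly, whereas the paper decomposes the domain into $A_{\omega}$ and its complement and uses $\int_{\mathbb{R}^{2}\setminus A_{\omega}}|\omega|^{p}\,dx=\int_{\mathbb{R}^{2}\setminus A_{\omega_{0}}}|\omega_{0}|^{p}\,dx$; these are the same fact in slightly different clothing, and the subsequent bookkeeping of fractional powers and $p$-independent constants is the same in both.
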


\begin{proof}
	As in the proof of the previous lemma, we fix $ t\geq0 $ and drop the parameter $ t $ for simplicity;
	\begin{equation}
		\nonumber \omega=\omega(t).
	\end{equation}
	To begin with, we use the decomposition
	\begin{equation}
		\nonumber\int_{\mathbb{R}^{2}\setminus B_{R}}|x|^{2}\omega dx=\Bigl( J(\omega)-J(\zeta)\Bigr) -\int_{B_{R}}|x|^{2}\left( \omega-\zeta\right) dx+\int_{\mathbb{R}^{2}\setminus B_{R}}|x|^{2}\zeta dx,
	\end{equation}	
	to get	
	\begin{align}
		\nonumber J(|\omega-\zeta|)&=\int_{B_{R}}|x|^{2}|\omega-\zeta|dx+\int_{\mathbb{R}^{2}\setminus B_{R}}|x|^{2}|\omega-\zeta|dx\\
		\nonumber&\leq R^{2}\int_{B_{R}}|\omega-\zeta|dx+\int_{\mathbb{R}^{2}\setminus B_{R}}|x|^{2}\omega dx+\int_{\mathbb{R}^{2}\setminus B_{R}}|x|^{2}\zeta dx\\
		\nonumber&\leq R^{2}\int_{B_{R}}|\omega-\zeta|dx+ \Big(\underbrace{J(\omega)}_{=J(\omega_{0})}-J(\zeta)\Big)+\int_{B_{R}}|x|^{2}| \omega-\zeta| dx+2\int_{\mathbb{R}^{2}\setminus B_{R}}|x|^{2}\zeta dx,
	\end{align}		
	which gives us
	\begin{equation}
		\begin{split}
			J(|\omega-\zeta|)&\leq 2R^{2}\int_{B_{R}}|\omega-\zeta|dx+J(|\omega_{0}-\zeta|)+2\int_{\mathbb{R}^{2}\setminus B_{R}}|x|^{2}\zeta dx\\
			&\leq 2R^{2}\left\| \omega-\zeta\right\| _{L^{1}}+J(|\omega_{0}-\zeta|)+2\int_{\mathbb{R}^{2}\setminus B_{R}}|x|^{2}\zeta dx.\label{JR^2}
		\end{split}		
	\end{equation}
	Then combined with the $ L^{1} $-estimate \eqref{L1} in Lemma \ref{lem3}, we have
	\begin{equation}
		\begin{split}
			J(|\omega-\zeta|)\leq &\;C'\Bigl[ \left\| \omega_{0}-\zeta\right\| _{L^{1}}^{\frac{1}{2}}+\left\| \omega_{0}-\zeta\right\| _{L^{1}}+J(|\omega_{0}-\zeta|)^{\frac{1}{2}}+J(|\omega_{0}-\zeta|)\Bigr]\\
			&+R^{2}C''\left( \int_{\mathbb{R}^{2}\setminus B_{R}}|x|^{2}\zeta dx\right)^{\frac{1}{2}}+C''\int_{\mathbb{R}^{2}\setminus B_{R}}|x|^{2}\zeta dx,\label{Jp=1}
		\end{split}
	\end{equation}
	where $ C'=C'(R,M,\alpha)>0 $ and $ C''=C''(M)>0 $ are constants. 
	Then adding \eqref{L1} from Lemma \ref{lem3} and \eqref{Jp=1} gives us \eqref{mainestimate} when $ p=1 $.
		
	Now we assume $ p\in(1,\infty) $. We may change constants
$ C'=C'(R,M,\alpha)>0 $ and $ C''=C''(M)>0 $   line by line, but they are remained independent of the choice of $p$.
	Then applying the estimate
	\begin{equation}
		\left\|g\right\| _{L^{1}}\leq\pi^{\frac{p-1}{p}}\left\|g\right\| _{L^{p}}+J(|g|)\leq\pi\left\|g\right\| _{L^{p}}+J(|g|),\quad g\in L^{p},\quad J(|g|)<\infty,\label{gLpJ}
	\end{equation}
	on \eqref{L1}, we get
	\begin{equation}
		\begin{split}
			\left\| \omega-\zeta\right\| _{L^{1}}\leq &\;C'\Bigl[ \left\| \omega_{0}-\zeta\right\| _{L^{p}}^{\frac{1}{2}}+\left\| \omega_{0}-\zeta\right\| _{L^{p}}+J(|\omega_{0}-\zeta|)^{\frac{1}{2}}+J(|\omega_{0}-\zeta|)\Bigr]+C''\left( \int_{\mathbb{R}^{2}\setminus B_{R}}|x|^{2}\zeta dx\right) ^{\frac{1}{2}}.\label{L1toLpJ}
		\end{split}
	\end{equation}
	Then using \eqref{L1toLpJ} on \eqref{JR^2}, we have
	\begin{equation}
		\begin{split}
			J(|\omega-\zeta|)\leq &\;C'\Bigl[ \left\| \omega_{0}-\zeta\right\| _{L^{p}}^{\frac{1}{2}}+\left\| \omega_{0}-\zeta\right\| _{L^{p}}+J(|\omega_{0}-\zeta|)^{\frac{1}{2}}+J(|\omega_{0}-\zeta|)\Bigr]\\
			&+R^{2}C''\left( \int_{\mathbb{R}^{2}\setminus B_{R}}|x|^{2}\zeta dx\right)^{\frac{1}{2}}+C''\int_{\mathbb{R}^{2}\setminus B_{R}}|x|^{2}\zeta dx.\label{L1w}
		\end{split}
	\end{equation}
	To get the $ L^{p} $ estimate, we use the set $ A_{\omega} $ and the function $ \widetilde{\omega} $ defined by \eqref{Af} and \eqref{ftilde} with $ f=\omega $ from the proof of Lemma \ref{lem3} once more. First, we have
	\begin{equation}
		\begin{split}
			\nonumber \left\| \omega-\zeta\right\| _{L^{p}}^{p}&=\int_{A_{\omega}}|\underbrace{\omega}_{=\widetilde{\omega}}-\zeta|^{p}dx+\int_{\mathbb{R}^{2}\setminus A_{\omega}}|\omega-\zeta|^{p}dx\\
			&\leq\left\| \widetilde{\omega}-\zeta\right\| _{L^{\infty}}^{p-1}\int_{A_{\omega}}|\omega-\zeta|dx+2^{p}\left( \int_{\mathbb{R}^{2}\setminus A_{\omega}}|\omega|^{p}dx+\int_{\mathbb{R}^{2}\setminus A_{\omega}}|\zeta|^{p}dx\right) \\
			&\leq(M+1)^{p-1}\left\| \omega-\zeta\right\| _{L^{1}}+2^{p}\Biggl( \underbrace{\int_{\mathbb{R}^{2}\setminus A_{\omega_{0}}} |\omega_{0}|^{p}dx}_{(I)}+\underbrace{\int_{\mathbb{R}^{2}\setminus A_{\omega}} |\zeta|^{p} dx}_{(II)}\Biggr) .
		\end{split}
	\end{equation}
	We also borrow the estimate \eqref{A^C};
	\begin{equation}
		\nonumber\left| \mathbb{R}^{2}\setminus A_{\omega_{0}}\right| \leq\left\| \omega_{0}-\zeta\right\| _{L^{1}}.
	\end{equation}
	Then we get
	\begin{equation}
		\begin{split}
			\nonumber (I)&\leq2^{p}\left( \int_{\mathbb{R}^{2}\setminus A_{\omega_{0}}} |\omega_{0}-\zeta|^{p}dx+\int_{\mathbb{R}^{2}\setminus A_{\omega_{0}}} |\zeta|^{p}dx\right) \leq2^{p}\Bigl( \left\| \omega_{0}-\zeta\right\|_{L^{p}}^{p} +M^{p}\left| \mathbb{R}^{2}\setminus A_{\omega_{0}}\right| \Bigr) \\
			&\leq2^{p}\Bigl( \left\| \omega_{0}-\zeta\right\|_{L^{p}}^{p} +M^{p}\left\| \omega_{0}-\zeta\right\|_{L^{1}}\Bigr).
		\end{split}
	\end{equation}
	The estimate of $ (II) $ follows in the same way;
	\begin{equation}
		\nonumber (II)\leq M^{p}\left| \mathbb{R}^{2}\setminus A_{\omega}\right|=M^{p}\left| \mathbb{R}^{2}\setminus A_{\omega_{0}}\right| \leq M^{p}\left\| \omega_{0}-\zeta\right\|_{L^{1}}.
	\end{equation}
	Then we have
	\begin{equation}
		\begin{split}
			\nonumber\left\| \omega-\zeta\right\| _{L^{p}}^{p}&\leq  (M+1)^{p-1} \left\| \omega-\zeta\right\| _{L^{1}}+2^{p}(2^{p}+1)M^{p}\left\| \omega_{0}-\zeta\right\| _{L^{1}}+ 2^{2p}\left\| \omega_{0}-\zeta\right\|_{L^{p}}^{p}\\
			&\leq  (M+1)^{p} \left\| \omega-\zeta\right\| _{L^{1}}+2^{3p}M^{p}\left\| \omega_{0}-\zeta\right\| _{L^{1}}+ 2^{2p}\left\| \omega_{0}-\zeta\right\|_{L^{p}}^{p}
		\end{split}
	\end{equation}
	Thus, by estimates \eqref{gLpJ} and \eqref{L1toLpJ}, we have
	\begin{equation}
		\begin{split}
			\left\| \omega-\zeta\right\| _{L^{p}}\leq&\; C'\Big[\left\| \omega_{0}-\zeta\right\| _{L^{p}}^{\frac{1}{2p}}+\left\| \omega_{0}-\zeta\right\| _{L^{p}}+J(|\omega_{0}-\zeta|)^{\frac{1}{2p}}+J(|\omega_{0}-\zeta|)^{\frac{1}{p}}\Big]\\
			&+C''\left( \int_{\mathbb{R}^{2}\setminus B_{R}}|x|^{2}\zeta dx\right) ^{\frac{1}{2p}}.
		\end{split}\label{Lp}
	\end{equation}
	Finally, adding \eqref{L1w} and \eqref{Lp}, we obtain \eqref{mainestimate} for $ p\in(1,\infty) $.
\end{proof}

Now we are ready to prove our main theorems.

\begin{proof}[Proof of Theorem \ref{thm2}]
	We let $ R,M\in(0,\infty) $, $ p\in[1,\infty) $, and set $ \alpha:=\pi R^{2}M $. Also, we let $ \zeta\in L^{\infty}(\mathbb{R}^{2}) $ be nonnegative, radially symmetric, non-increasing, and compactly supported with \eqref{RM}. Additionally, we let $ \omega_{0}\in L^{\infty}(\mathbb{R}^{2}) $ be nonnegative with $ J(\omega_{0})<\infty $ and $ \omega(t) $ be the solution of \eqref{eq1} for $ \omega_{0} $. Then we have $ \left\| \zeta\right\| _{L^{1}}=\int_{B_{R}}\zeta dx\leq \pi R^{2}M\leq\alpha $, so we can use the conclusion \eqref{mainestimate} in Lemma \ref{lemmain}. Furthermore, we have
	\begin{equation}
		\nonumber\int_{\mathbb{R}^{2}\setminus B_{R}}|x|^{2}\zeta dx=0,
	\end{equation}
	which makes all the terms containing $ \int_{\mathbb{R}^{2}\setminus B_{R}}|x|^{2}\zeta dx $ on the right-hand side of \eqref{mainestimate} zero. In sum, we get
	\begin{equation}
		\nonumber\sup_{t\geq0}\left\| \omega(t)-\zeta\right\| _{{J}_{p}}\leq C_{3}\Bigr[ \left\| \omega_{0}-\zeta\right\| _{{J}_{p}}^{\frac{1}{2p}}+\left\| \omega_{0}-\zeta\right\| _{{J}_{p}}\Bigr] ,
	\end{equation}
	with $ C_{3}=C_{3}(R,M,\alpha)=C_{3}(R,M,\pi R^{2}M) $.
\end{proof}

\begin{proof}[Proof of Theorem \ref{thm3}]
	We let $ \varepsilon>0 $, $ p\in[1,\infty) $, and $ \zeta\in L^{\infty}(\mathbb{R}^{2}) $ be nonnegative, radially symmetric, non-increasing with
	\begin{equation}
		\int_{\mathbb{R}^{2}}|x|^{6}\zeta dx<\infty.\label{x^6}
	\end{equation}
	We set $ M,\alpha\in(0,\infty) $ by $ \left\| \zeta\right\| _{L^{\infty}}=M $, $ \left\| \zeta\right\| _{L^{1}}=\alpha $. (If we have $ \zeta\equiv0 $, then the conclusion is trivial.) Then we borrow the constant $ C_{4}=C_{4}(M) $ from Lemma \ref{lemmain}. We also let $ \omega_{0}\in L^{\infty}(\mathbb{R}^{2}) $ be nonnegative with $ J(\omega_{0})<\infty $ and $ \omega(t) $ be the solution of \eqref{eq1} for $ \omega_{0} $. Then due to $ J(\zeta)<\infty $ by \eqref{x^6}, there exists (large) $ R_{1}>0 $ such that
	\begin{equation}
		\nonumber C_{4}\Biggl[\left( \int_{\mathbb{R}^{2}\setminus B_{R}}|x|^{2}\zeta dx\right)^{\frac{1}{2p}}+\int_{\mathbb{R}^{2}\setminus B_{R}}|x|^{2}\zeta dx\Biggr]\leq\varepsilon\quad\text{ for all }\quad R\geq R_{1}.
	\end{equation}
	In addition, there exists (large) $ R_{2}>0 $ such that
	\begin{equation}
		\nonumber R^{2}\cdot C_{4}\left( \int_{\mathbb{R}^{2}\setminus B_{R}}|x|^{2}\zeta dx\right) ^{\frac{1}{2}}\leq\varepsilon\quad\text{ for all }\quad R\geq R_{2}.
	\end{equation}
	Indeed, this holds due to \eqref{x^6};
	\begin{equation}
		\nonumber R^{2}\cdot\left( \int_{\mathbb{R}^{2}\setminus B_{R}}|x|^{2}\zeta dx\right) ^{\frac{1}{2}}\leq \left( \int_{\mathbb{R}^{2}\setminus B_{R}}|x|^{6}\zeta dx\right) ^{\frac{1}{2}}\longrightarrow0\quad\text{ as }\quad R\longrightarrow\infty.
	\end{equation}
	We take $ R=\max\left\lbrace R_{1}, R_{2}\right\rbrace  $. Now we take the constant $ C_{3}=C_{3}(R,M,\alpha) $ from Lemma \ref{lemmain}. Finally, by taking (small) $ \delta>0 $ that satisfies $ C_{3}(\delta^{\frac{1}{2p}}+\delta)\leq\varepsilon $, if we have
	\begin{equation}
		\nonumber \left\| \omega_{0}-\zeta\right\| _{{J}_{p}}\leq\delta,
	\end{equation}
	then the estimate \eqref{mainestimate} in Lemma \ref{lemmain} says
	\begin{equation}
		\begin{split}
			\nonumber \sup_{t\geq0}\left\| \omega(t)-\zeta\right\| _{{J}_{p}}\leq &\;C_{3}\Bigr[ \left\| \omega_{0}-\zeta\right\| _{{J}_{p}}^{\frac{1}{2p}}+\left\| \omega_{0}-\zeta\right\| _{{J}_{p}}\Bigr] +R^{2}\cdot C_{4}\left( \int_{\mathbb{R}^{2}\setminus B_{R}}|x|^{2}\zeta dx\right) ^{\frac{1}{2}}\\
			&+C_{4}\Biggl[\left( \int_{\mathbb{R}^{2}\setminus B_{R}}|x|^{2}\zeta dx\right)^{\frac{1}{2p}}+\int_{\mathbb{R}^{2}\setminus B_{R}}|x|^{2}\zeta dx\Biggr]\\
			\leq &\;\varepsilon+\varepsilon+\varepsilon=3\varepsilon.
		\end{split}		
	\end{equation}
	
\end{proof}
 
{
 \section*{Acknowledgement}}

\noindent KC has been supported by   the National Research Foundation of Korea (NRF-2018R1D1A1B07043065).
KC thanks In-Jee Jeong for helpful discussions.

\ \\ 

\bibliography{Choi_Lim_20210322_references}

\end{document}